\newcommand{\bbC}{\mathbb{C}}
\newcommand{\bbN}{\mathbb{N}}
\newcommand{\bbR}{\mathbb{R}}
\newcommand{\calK}{\mathcal{K}}
\newcommand{\calL}{\mathcal{L}}
\newcommand{\calU}{\mathcal{U}}
\DeclareMathOperator{\one}{\mathbbm{1}}
\newcommand{\argument}{\mathord{\,\cdot\,}}
\newcommand{\dx}{\;\mathrm{d}}
\DeclareMathOperator{\fix}{fix}
\newcommand{\norm}[1]{\left\lVert #1 \right\rVert}
\newcommand{\modulus}[1]{\left\lvert #1 \right\rvert}
\newcommand{\spec}{\sigma}
\newcommand{\Res}{\mathcal{R}}
\newcommand{\sprEss}{r_{\operatorname{ess}}}
\newcommand{\specEss}{\sigma_{\operatorname{ess}}}
\theoremstyle{definition}
\newtheorem{definition}{Definition}[section]
\newtheorem{remark}[definition]{Remark}
\newtheorem*{remark_no_number}{Remark}
\newtheorem*{remarks_no_number}{Remarks}
\theoremstyle{plain}
\newtheorem{proposition}[definition]{Proposition}
\newtheorem{lemma}[definition]{Lemma}
\newtheorem{theorem}[definition]{Theorem}
\newtheorem{corollary}[definition]{Corollary}
\numberwithin{equation}{section}
\begin{document}

\title[Spectral gaps for hyperbounded operators]{Spectral gaps for hyperbounded operators}
\author{Jochen Gl\"uck}
\address{Jochen Gl\"uck, Universit\"at Passau, Fakultät für Informatik und Mathematik, Innstra{\ss}e 33, 94032 Passau, Germany}
\email{jochen.glueck@uni-passau.de}
\subjclass[2010]{Primary 47A10; Secondary: 47B65, 47B38, 47D06, 46E30, 46B08}
\keywords{Essential spectral radius; quasi-compactness of positive operators; convergence of semigroups; ultrapower techniques; geometry of Banach spaces; uniform $p$-integrability}
\date{\today}
\begin{abstract}
	We consider a positive and power-bounded linear operator $T$ on $L^p$ over a finite measure space and prove that, if $TL^p \subseteq L^q$ for some $q > p$, then the essential spectral radius of $T$ is strictly smaller than $1$. As a special case, we obtain a recent result of Miclo who proved this assertion for self-adjoint ergodic Markov operators in the case $p=2$ and thereby solved a long-open problem of Simon and H{\o}egh-Krohn. 
	
	Our methods draw a connection between spectral theory and the geometry of Banach spaces: they rely on a result going back to Groh that encodes spectral gap properties via ultrapowers, and on the fact that an infinite dimensional $L^p$-space cannot by isomorphic to an $L^q$-space for $q \not= p$.
	
	We also prove a number of variations of our main result: (i) it follows from theorems of Lotz and Mart\'{i}nez that the condition $TL^p \subseteq L^q$ can be replaced with the weaker assumption that $T$ maps the positive part of the $L^p$-unit ball into a uniformly $p$-integrable set; (ii) while it is known that the positivity assumption on $T$ cannot in general be omitted, we show that we can replace it with the assumption that $T$ is contractive both on $L^p$ and on $L^q$; (iii) we prove a version of the theorem which allows us, under appropriate circumstances, to also consider non-finite measures spaces; (iv) our result also has a uniform version: there exists an upper bound $c \in [0,1)$ for the essential spectral radius of $T$, where $c$ depends on certain quantitative properties of $T$, $L^p$ and $L^q$.
\end{abstract}

\maketitle

\section{Introduction}

The \emph{essential spectral radius} $\sprEss(T)$ of a bounded linear operator $T$ on a complex Banach space is defined to be the smallest number $r$ in $[0,\infty)$ such that every complex number of modulus $> r$ is either in the resolvent set of $T$ or a pole of the resolvent of $T$ and an eigenvalue of $T$ of finite algebraic multiplicity. The aim of this paper is to prove the following result, as well as several variations of it.

\begin{theorem} \label{thm:main-result-introduction}
	Let $1 < p < q < \infty$ and let $(\Omega,\mu)$ be a finite measure space. If a positive and power-bounded linear operator $T: L^p(\Omega,\mu) \to L^p(\Omega,\mu)$ satisfies $TL^p(\Omega,\mu) \subseteq L^q(\Omega,\mu)$, then $\sprEss(T) < 1$.
\end{theorem}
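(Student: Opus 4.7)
I argue by contradiction: assuming $\sprEss(T) \ge 1$ --- equivalently $\sprEss(T) = 1$, since power-boundedness forces $\sprEss(T) \le r(T) \le 1$ --- the plan is to produce an isomorphism between an infinite-dimensional $L^p$-space and an $L^q$-space, contradicting the classical theorem from Banach space geometry that such spaces are non-isomorphic when $p \neq q$.

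Because $(\Omega,\mu)$ is finite, the inclusion $L^q \hookrightarrow L^p$ is continuous, and so by the closed graph theorem the hypothesis $TL^p \subseteq L^q$ upgrades to boundedness of $T \colon L^p \to L^q$. I fix a free ultrafilter $\calU$ on $\bbN$ and form the Banach-lattice ultrapowers $L^p_\calU$ and $L^q_\calU$. By the classical result of Dacunha-Castelle and Krivine (with Heinrich's extension to the complex setting), these are isometrically lattice-isomorphic to $L^p$- respectively $L^q$-spaces over a common enlarged measure space, and the estimate $\norm{\argument}_{L^p} \le \mu(\Omega)^{1/p-1/q}\,\norm{\argument}_{L^q}$ passes to the ultrapower, so $L^q_\calU \hookrightarrow L^p_\calU$ continuously. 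The lifted operator $T^\calU$ is positive and power-bounded on $L^p_\calU$ and bounded as a map $L^p_\calU \to L^q_\calU$.

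Next, I would invoke the Groh-type ultrapower encoding of the peripheral essential spectrum. Since $\sprEss(T) = 1$, there is $\lambda$ on the unit circle in $\specEss(T)$, hence a normalised sequence $(x_n)\subseteq L^p$ with $(T-\lambda)x_n \to 0$ admitting no convergent subsequence. After extracting a basic subsequence, the shifts $\hat x_k := [x_{n+k}]$ ($k \in \bbN_0$) are linearly independent eigenvectors of $T^\calU$ at $\lambda$ in $L^p_\calU$; positivity of $T^\calU$ together with $\modulus{\lambda}=1$ then upgrades this infinite-dimensional eigenspace to an infinite-dimensional, $T^\calU$-invariant closed \emph{sublattice} $F$ of $L^p_\calU$ on which the peripheral spectrum of $T^\calU$ is pure point. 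After replacing $T$ by a suitable power one may arrange $F \subseteq \fix(T^\calU)$.

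As a closed sublattice of $L^p_\calU$, $F$ is isometrically lattice-isomorphic to an infinite-dimensional $L^p$-space. Since $F = T^\calU F$ and $T^\calU$ maps $L^p_\calU$ into $L^q_\calU$, we have $F \subseteq L^q_\calU$; the closed graph theorem then forces the $L^p$- and $L^q$-norms to be equivalent on $F$, which is also a closed sublattice of $L^q_\calU$ (the lattice operations are pointwise and hence shared between $L^p_\calU$ and $L^q_\calU$). So $F$ is simultaneously an infinite-dimensional $L^p$- and $L^q$-space, contradicting their non-isomorphism for $p\neq q$. The main obstacle is the middle step --- upgrading the non-compactness of the approximate-eigenvector sequence to an infinite-dimensional $T^\calU$-invariant closed sublattice in the ultrapower --- where both positivity of $T$ and the Groh-type machinery do essential work; everything else is a standard reduction or a citation of classical Banach space geometric results.
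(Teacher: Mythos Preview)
Your overall architecture---lift to an ultrapower and exploit the non-isomorphism of infinite-dimensional $L^p$- and $L^q$-spaces---is exactly the paper's strategy. But the step you yourself flag as ``the main obstacle'' is a genuine gap, and the resolution you sketch does not work.

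From $\sprEss(T)=1$ you pick $\lambda\in\specEss(T)$ with $\modulus{\lambda}=1$ and produce an infinite-dimensional $\ker(\lambda-T^\calU)$; this part is fine and is precisely the content of the paper's Theorem~\ref{thm:kernel-on-ultra-power}. The trouble is the next move: ``positivity of $T^\calU$ together with $\modulus{\lambda}=1$ upgrades this eigenspace to an infinite-dimensional $T^\calU$-invariant closed sublattice $F$'', followed by ``after replacing $T$ by a suitable power one may arrange $F\subseteq\fix(T^\calU)$''. If $\lambda$ is not a root of unity, no power of $T$ turns $\lambda$-eigenvectors into fixed vectors, so the second claim fails outright. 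And nothing guarantees that $1$ itself lies in $\specEss(T)$ when $\sprEss(T)=1$, so you cannot simply take $\lambda=1$. Even when $\lambda=1$, the fixed space $\fix(T^\calU)$ of a positive power-bounded operator is in general \emph{not} a sublattice; for $f\in\fix(T^\calU)$ one only has $T^\calU\modulus{f}\ge\modulus{f}$, not equality.

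The paper resolves both difficulties by reversing the logic. It first proves directly that $\fix(T^\calU)$ is finite-dimensional (Lemma~\ref{lem:fixed-space-of-positive-operators} applied to $T^\calU$): to force the fixed space to be a sublattice one passes to the quotient by the absolute kernel of the mean ergodic projection, and on that quotient the $L^p$/$L^q$ comparison goes through. Corollary~\ref{cor:eigenspace-on-ultra-power} (used in the forward direction) then shows that $1$ is a pole of finite algebraic multiplicity. Finally, the Niiro--Sawashima theorem---which needs positivity---bootstraps from the single value $\lambda=1$ to the entire peripheral spectrum. Your plan attempts to handle an arbitrary peripheral $\lambda$ in one stroke, but the passage from $\ker(\lambda-T^\calU)$ to an infinite-dimensional sublattice of $\fix(T^\calU)$ is exactly where the quotient construction and Niiro--Sawashima are doing indispensable work that your sketch has not replaced.
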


Here, \emph{positive} means that $Tf \ge 0$ almost everywhere whenever $f \in L^p(\Omega,\mu)$ satisfies $f \ge 0$ almost everywhere, and \emph{power-boundedness} means that the \emph{power bound} $\sup_{n \in \bbN_0} \norm{T^n}$ is finite. The above theorem is actually a special case of two more general results below (Corollary~\ref{cor:main-result-without-power-boundedness} and Theorem~\ref{thm:main-result-positive}).

The relation between Theorem~\ref{thm:main-result-introduction} and the title of our paper is as follows: in the situation of the theorem the property $TL^p(\Omega,\mu) \subseteq L^q(\Omega,\mu)$ is usually referred to as \emph{hyperboundedness} of $T$. Moreover, we say that a bounded linear operator $T$ of spectral radius $1$ on a complex Banach space has a \emph{spectral gap} if the number $1$ is a spectral value and a pole of the resolvent of $T$. Thus, the conclusion of Theorem~\ref{thm:main-result-introduction} implies that either the spectral radius of $T$ is strictly smaller than $1$ or that $T$ has a spectral gap.

\subsection*{Historical remarks and related literature}

A special case of Theorem~\ref{thm:main-result-introduction} was recently proved by Miclo \cite[Theorem~1]{Miclo2015}; he assumed in addition that $p = 2$, that $T$ is self-adjoint and that the fixed space $\ker(1-T)$ is spanned by the constant function with value $1$. Miclo's result was a breakthrough, solving a long open problem of Simon and H{\o}egh-Krohn \cite{Simon1972} which arose in the 1970s in the context of mathematical physics. Of all the related articles that engaged this problem during the last decades, let us mention the works \cite{Aida1998, Hino2000, Wu2001, Hino2002, Gong2006} which impose a strengthened positivity assumption on the operator and the paper \cite{Wang2004} which assumes explicit numerical bounds for the operator norm of $T$ from $L^p$ to $L^q$. Moreover, we note that the technique used by Miclo was further developed in the recent article \cite{Wang2014}.

Quite interestingly, it turns out that Theorem~\ref{thm:main-result-introduction} can be obtained as a consequence of an old, but hardly known result of Lotz \cite[Corollary~7]{Lotz1986} about positive operators on Banach lattices (and a slightly later result of Mart\'{i}nez \cite[Theorem~2.6]{Martinez1993} even implies a generalisation of Theorem~\ref{thm:main-result-introduction}). We discuss this, along with related results about \emph{uniform $p$-integrability}, in Section~\ref{section:uniform-p-integrability}. In the rest of the paper we then provide a more self-contained proof of Theorem~\ref{thm:main-result-introduction}, as well as several variations of this theorem, based on geometric properties of $L^p$-spaces.

\begin{remark_no_number}
	We should mention here that, when comparing our results to those in \cite{Miclo2015}, one has to be a bit careful about the notion \emph{spectral gap}. In \cite{Miclo2015} a slightly stronger definition is used: there an operator $T$ is said to have a spectral gap if, in addition to the properties occurring in our definition above, the eigenspace $\ker(1-T)$ is spanned by the constant function $\one$. However, it is important to note that the property $\ker(1-T) = \bbC \cdot \one$ enters the main result of \cite{Miclo2015} both as an assumption and -- encoded in the notion \emph{spectral gap} -- as a conclusion.
	
	We do not use an assumption of the form $\ker(1-T) = \bbC \cdot \one$ which is why, in the context of the present paper, our slightly less restrictive definition of a \emph{spectral gap} appears to be more appropriate.
\end{remark_no_number}

\subsection*{Techniques, further results and organisation of the paper}

The proof in \cite{Miclo2015} relies on Cheeger inequalities on finite graphs and an approximation procedure. Our approach is very different. Our central ingredient is an ultrapower technique which goes originally back to Groh \cite[Proposition~3.2]{Groh1984} and was later refined by Caselles \cite[Proposition~3.3]{Caselles1987}. It allows us, under mild conditions, to prove that a spectral value $\lambda$ of an operator $T$ is a pole of the resolvent $\Res(\argument,T)$ (with finite-rank residuum) if $\ker(\lambda-T^\calU)$ is finite dimensional for a certain ``lifted'' operator $T^\calU$. We explain this in detail in Section~\ref{section:spectral-gaps-via-ultra-powers}.

The same kind of ultrapower argument was used by Lotz \cite[Corollary~7]{Lotz1986} and Mart\'{i}nez \cite[Theorem~2.6]{Martinez1993} to derive sufficient criteria for a positive operator $T$ on an $L^p$-space to satisfy $\sprEss(T) < 1$; we use these criteria, which rely on rather involved Banach lattice techniques, as black boxes in Section~\ref{section:uniform-p-integrability} to prove (a generalisation of) Theorem~\ref{thm:main-result-introduction}. Then, after giving a detailed account of the relevant ultrapower technology in Section~\ref{section:spectral-gaps-via-ultra-powers}, we provide another, more self-contained proof of Theorem~\ref{thm:main-result-introduction} in Section~\ref{section:spectral-gaps-for-hyperbounded-operators}; this proof is based on the observation that an operator which maps $L^p$ to $L^q$ must have a finite dimensional fixed space as a consequence of the geometric properties of $L^p$- and $L^q$-spaces. Our argument thus draws a connection from the geometry of Banach spaces to the spectral theory of linear operators. A similar connection, though with a different technical flavour, has recently been exploited in \cite{Glueck2016b}.

Our approach also allows us to vary Theorem~\ref{thm:main-result-introduction} in several respects. First of all we prove a somewhat more general result (Theorem~\ref{thm:main-result-positive}) which also allows us to consider $L^p$-spaces over non-finite measure spaces in certain circumstances (Corollary~\ref{cor:main-result-infinite-measure-space}). Further, it was shown by Wang in \cite[Theorem~1.4]{Wang2004} that the positivity assumption cannot be removed from Theorem~\ref{thm:main-result-introduction}. However, we prove that the positivity assumption can be replaced with an additional contractivity assumption (Theorem~\ref{thm:main-result-contractive}) or with a hypercontractivity assumption (Corollary~\ref{cor:hypercontractive-operators}). Moreover, such contractivity assumptions can be slightly weakened if $p = 2$ or $q = 2$ (Remark~\ref{rem:main-result-hilbert}).

In Section~\ref{section:uniform-estimates} we demonstrate how uniform estimates for the essential spectral radius of hyperbounded operators can be derived from our results by means of an ultraproduct argument. Section~\ref{section:applications} gives an application of our results to the long-term behaviour of positive operator semigroups. In the concluding Section~\ref{section:concluding-remarks} we discuss the case $p = 1$ as well as the case $q = \infty$, and we make a brief remark about the situation on $\ell^p$-spaces. In the appendix we recall several geometric properties of $L^p$-spaces which are needed in our proofs.

\subsection*{Preliminaries}

We assume the reader to be familiar with real and complex Banach lattices; standard references for this theory are, for instance, the monographs \cite{Schaefer1974, Zaanen1983, Meyer-Nieberg1991}. Here we only recall the basic terminology that a linear operator $T$ on a Banach lattice $E$ is called \emph{positive} if $Tf \ge 0$ for each $0 \le f \in E$.

The space of bounded linear operators on a Banach space $E$ is denoted by $\calL(E)$; we endow this space with the operator norm topology throughout. The reader is also assumed to be familiar with standard spectral theory for linear operators; for a detailed treatment we refer, for instance, to the spectral theory chapters in the monographs \cite{Taylor1958, Kato1995, Yosida1995}. If $T$ is a bounded linear operator on a complex Banach space $E$ we denote the \emph{spectrum} of $T$ by $\spec(T)$; if $\lambda \in \bbC$ is not a spectral value of $T$, then we denote the \emph{resolvent} of $T$ at $\lambda$ by $\Res(\lambda,T) := (\lambda - T)^{-1}$. The \emph{essential spectrum} of $T$ is defined as the set $\specEss(T) := \{\lambda \in \bbC: \, \lambda - T \text{ is not Fredholm}\}$. The essential spectrum is compact and, if $E$ is infinite dimensional, non-empty since $\specEss(T)$ coincides with the spectrum of the equivalence class of $T$ in the \emph{Calkin algebra} $\calL(E) / \calK(E)$ (where $\calK(E) \subseteq \calL(E)$ denotes the algebraic ideal of all compact linear operators on $E$). The essential spectral radius $\sprEss(T)$ as defined at the very beginning of the paper coincides with the supremum of the modulus of all numbers in $\specEss(T)$. For any $r > \sprEss(T)$ only finitely many spectral values of $T$ can have modulus at least $r$.

The \emph{fixed space} of an operator $T \in \calL(E)$ on a Banach space $E$ is defined as $\fix T := \ker(1-T)$. We call a bounded linear operator $T$ \emph{contractive} if its operator norm satisfies $\norm{T} \le 1$.

We make extensive use of ultrapower arguments throughout the article; some important facts about the construction of ultrapowers are briefly recalled in Subsection~\ref{subsection:ultra-powers}. For a detailed treatment of ultrapowers and ultraproducts of Banach spaces we refer the reader to the survey article \cite{Heinrich1980}. 

Throughout the paper, all occurring Banach spaces and Banach lattices are assumed to be defined over the complex scalar field. All measure spaces in the paper are allowed to be non-$\sigma$-finite unless otherwise stated.

\section{Uniform $p$-integrability} \label{section:uniform-p-integrability}

In this section we demonstrate that Theorem~\ref{thm:main-result-introduction} follows easily from an old but apparently very little known result of Lotz \cite[Corollary~7]{Lotz1986}, and that it can -- by a more general result of Mart\'{i}nez \cite[Theorem~2.6]{Martinez1993} -- even be generalised to operators which are not power-bounded but merely have spectral radius at most $1$.

Let $p \in [1,\infty)$ and let $(\Omega,\mu)$ be a finite measure space. Recall that a subset $S \subseteq L^p(\Omega,\mu)$ is called \emph{uniformly $p$-integrable} if
\begin{align*}
	\sup_{f \in S} \int_{\{\modulus{f} \ge r\}} \modulus{f}^p \dx\mu \to 0
\end{align*}
as the real number $r$ tends to $\infty$. If $q > p$, then the unit ball of $L^q(\Omega,\mu)$ is uniformly $p$-integrable; this follows easily from H\"older's inequality.

Wu \cite{Wu2000} considered positive operators $T$ on $L^p$ which map the unit ball into a uniformly $p$-integrable set, and showed that this has considerable consequences for the spectral properties of $T$. Uniform $p$-integrability is also very instrumental in order to derive Theorem~\ref{thm:main-result-introduction} from the following result that was shown by Lotz \cite[Corollary~7 on p.\,152]{Lotz1986} for Ces\`{a}ro bounded operators and, a few years later, by Mart\'{i}nez \cite[Theorem~2.6]{Martinez1993} for the general case.

\begin{theorem} \label{thm:lotz-martinez}
	Let $p \in [1,\infty)$ and let $(\Omega,\mu)$ be an arbitrary measure space. Let $T$ be a positive linear operator on $L^p := L^p(\Omega,\mu)$ with spectral radius at most $1$, and assume that there exists an integer $m \ge 1$, a real number $\eta \in [0,1)$ and a function $0 \le u \in L^p$ such that
	\begin{align}
		\label{eq:dual-doeblin-positive}
		T^m B_+ \subseteq \eta B_+ + [0,u];
	\end{align}
	here, $B_+ = \{f \in L^p: \, \norm{f}_{L^p} \le 1 \text{ and } f \ge 0\}$ denotes the positive part of the closed unit ball in $L^p$ and $[0,u] = \{f \in L^p: \, 0 \le f \le u\}$ denotes the order interval between $0$ and $u$.
	
	Then the essential spectral radius of $T$ satisfies $\sprEss(T) < 1$.
\end{theorem}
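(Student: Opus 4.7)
The plan is to exploit the ultrapower machinery developed in Section~\ref{section:spectral-gaps-via-ultra-powers}. I would pass to an ultrapower $(L^p)^{\calU}$ along a free ultrafilter $\calU$ on $\bbN$; the ultrapower of an $L^p$-space is itself canonically isometric to an $L^p$-space over a (typically larger) measure space, and in particular inherits the Banach lattice structure. The operator $T$ lifts to a positive operator $T^{\calU}$, and the hypothesis \eqref{eq:dual-doeblin-positive} transfers verbatim to $(T^{\calU})^m B_+^{\calU} \subseteq \eta B_+^{\calU} + [0,\hat u]$, where $\hat u \in (L^p)^{\calU}$ is the canonical image of $u$. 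By the Groh--Caselles reduction of Section~\ref{section:spectral-gaps-via-ultra-powers}, the desired conclusion $\sprEss(T) < 1$ follows once one establishes that $\ker(\lambda - T^{\calU})$ is finite-dimensional for every $\lambda \in \bbC$ with $\modulus{\lambda} = 1$; spectral values of larger modulus do not occur, since the spectral radius of $T$ is at most~$1$.

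Fix such a $\lambda$ and set $V := \ker(\lambda - T^{\calU})$. For every $f \in V$, positivity of $T^{\calU}$ yields
\begin{equation*}
  \modulus{f} \;=\; \modulus{\lambda^{m} f} \;=\; \bigl\lvert (T^{\calU})^{m} f\bigr\rvert \;\le\; (T^{\calU})^{m}\modulus{f},
\end{equation*}
so $\modulus{f}$ is a \emph{super-fixed point} of $(T^{\calU})^m$. Applying the lifted Doeblin-type inclusion to $\modulus{f}/\norm{f}_p \in B_+^{\calU}$ and invoking the Riesz decomposition lemma, one obtains $\modulus{f} = a + b$ with $\norm{a}_p \le \eta\norm{f}_p$ and $0 \le b \le \norm{f}_p\,\hat u$. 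In other words, the positive part $\{\modulus{f}: f \in V,\ \norm{f}_p \le 1\}$ of the unit ball of $V$ lies within norm-distance~$\eta$ of the order interval $[0,\hat u]$ in $(L^p)^{\calU}$.

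To force $V$ to be finite-dimensional, I argue by contradiction. If $\dim V = \infty$, a Kadec--Pe\l czy\'nski-type dichotomy for closed subspaces of $L^p$ (of the sort recalled in the appendix) furnishes a normalized sequence $(f_n) \subseteq V$ together with a pairwise disjoint sequence $(d_n)$ in $(L^p)^{\calU}$ satisfying $\modulus{d_n} \le \modulus{f_n}$ pointwise and $\norm{\modulus{f_n} - \modulus{d_n}}_p \to 0$. Writing $\modulus{f_n} = a_n + b_n$ as above and applying Riesz decomposition to $b_n \le \modulus{d_n} + (\modulus{f_n} - \modulus{d_n})$, one splits $b_n = b_n^{(1)} + b_n^{(2)}$ with $b_n^{(1)} \le \modulus{d_n}$ pairwise disjoint and $\norm{b_n^{(2)}}_p \to 0$. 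Since $(b_n^{(1)})$ is a disjoint sequence in the L-weakly compact order interval $[0,\hat u]$ of the $L^p$-space $(L^p)^{\calU}$, we must have $\norm{b_n^{(1)}}_p \to 0$; hence $\norm{b_n}_p \to 0$, and
\begin{equation*}
  1 \;=\; \norm{f_n}_p \;=\; \norm{\modulus{f_n}}_p \;\le\; \norm{a_n}_p + \norm{b_n}_p \;\le\; \eta + o(1),
\end{equation*}
contradicting $\eta < 1$.

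The main obstacle I anticipate is securing the Kadec--Pe\l czy\'nski approximation in precisely the form used above: an almost-disjoint sequence with the pointwise domination $\modulus{d_n} \le \modulus{f_n}$, so that the Riesz decomposition of the $b_n$'s produces an honestly disjoint component sitting inside $[0,\hat u]$. This is standard at the level of closed subspaces of ordinary $L^p$-spaces but requires care in the ultrapower $(L^p)^{\calU}$. A related and indispensable point is that $(L^p)^{\calU}$ itself be genuinely an $L^p$-space, so that order intervals remain L-weakly compact; this is precisely where the assumption $p < \infty$ enters in an essential way, and is what ultimately allows the lattice geometry of $L^p$ to do the decisive work.
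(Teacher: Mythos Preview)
First, a framing remark: the paper does \emph{not} prove Theorem~\ref{thm:lotz-martinez}. It is quoted as a result of Lotz (in the Ces\`aro bounded case) and Mart\'{i}nez (in general) and used as a black box; the paper explicitly notes that these proofs ``use quite technical machinery from the theory of Banach lattices''. So there is no in-paper proof to compare with, and your proposal must be judged on its own merits.

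Your overall strategy---lift to the ultrapower, transfer the Doeblin inclusion, and invoke Corollary~\ref{cor:eigenspace-on-ultra-power} once each unimodular eigenspace of $T^{\calU}$ is shown to be finite dimensional---is exactly the architecture that Lotz and Mart\'{i}nez use, and the preliminary steps (the transfer of \eqref{eq:dual-doeblin-positive} to $(L^p)^{\calU}$, the inequality $\modulus{f}\le (T^{\calU})^m\modulus{f}$, the Riesz decomposition yielding $\modulus{f}=a+b$ with $\norm{a}_p\le\eta$ and $0\le b\le\hat u$) are correct.

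The gap is at the step you yourself flag as the main obstacle. The ``Kadec--Pe\l czy\'nski-type dichotomy'' you invoke---that every infinite-dimensional closed subspace $V\subseteq L^p$ contains a normalised sequence $(f_n)$ whose moduli are approximated by a pairwise disjoint sequence $0\le d_n\le\modulus{f_n}$---is \emph{false}, and nothing of the sort is recalled in the appendix (which only treats the non-isomorphism of $L^p$ and $L^q$). A counterexample for every $p\in[1,\infty)$ is the closed linear span $V$ of the Rademacher functions in $L^p[0,1]$: by Khintchine's inequality there exist $\varepsilon,\delta>0$ such that every normalised $f\in V$ satisfies $\mu(\modulus{f}\ge\varepsilon)\ge\delta$. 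If $(d_n)$ were pairwise disjoint with $\norm{\,\modulus{f_n}-d_n}_p\to 0$, then on the complement of $\operatorname{supp}d_n$ one has $\modulus{f_n}=\modulus{f_n}-d_n$, which forces $\mu(\operatorname{supp}d_n)\ge\delta/2$ for large $n$; this contradicts disjointness in a probability space. Thus no such approximation exists in $V$, and your contradiction argument does not get off the ground.

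The genuine difficulty in the Lotz--Mart\'{i}nez theorem lies precisely here: one must extract finite-dimensionality of $\ker(\lambda-T^{\calU})$ from the Doeblin inclusion without appealing to an almost-disjointness property that general subspaces of $L^p$ simply need not have. The original proofs replace this step by Banach-lattice arguments (AM-compactness/L-weak compactness of the operator $f\mapsto (T^{\calU})^m f - \eta$-part, positivity of $T$, and the structure of the peripheral spectrum), not by a subspace dichotomy. Your outline does not supply a substitute for this, so the proof as written is incomplete.
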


\begin{remark}
	Rather than~\eqref{eq:dual-doeblin-positive}, Mart\'{i}nez actually assumed the slightly different condition
	\begin{align}
		\label{eq:dual-doeblin-non-positive}
		T^mB \subseteq \eta B + B_u,
	\end{align}
	where $B$ denotes the closed unit ball in $L^p$ and where $B_u$ denotes the set $\{f \in L^p: \, \modulus{f} \le u\}$. However, both conditions are equivalent in the following sense:
	\begin{enumerate}[\upshape (a)]
		\item Condition~\eqref{eq:dual-doeblin-non-positive} implies~\eqref{eq:dual-doeblin-positive} (for the same $m$, $\eta$ and $u$). This follows from the so-called \emph{Riesz decomposition property} $[0,u+v] = [0,u] + [0,v]$ which is true for all functions $u,v \ge 0$.
		
		\item Condition~\eqref{eq:dual-doeblin-positive} implies~\eqref{eq:dual-doeblin-non-positive} (with different $m$, $\eta$ and $u$). To see this, we iterate \eqref{eq:dual-doeblin-positive} sufficiently often to obtain $T^{nm}B_+ \subseteq \frac{1}{8}B_+ + [0, \tilde u]$. Hence, $T^{nm}B \subseteq \frac{1}{2}B + B_{4\tilde u}$ by the triangle inequality for the complex modulus.
	\end{enumerate}
\end{remark}

Theorem~\ref{thm:lotz-martinez} is a powerful result which has the following two corollaries; the second of them contains Theorem~\ref{thm:main-result-introduction} as a special case.

\begin{corollary} \label{cor:uniformly-p-integrable}
	Let $p \in [1,\infty)$ and let $(\Omega,\mu)$ be a finite measure space. Let $T$ be a positive linear operator on $L^p := L^p(\Omega,\mu)$ with spectral radius at most $1$ and assume that, for some $m \in \bbN$, the power $T^m$ maps $B_+$ (the positive part of the unit ball) to a uniformly $p$-integrable set. Then $\sprEss(T) < 1$.
\end{corollary}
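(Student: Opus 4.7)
The plan is to derive the corollary directly from Theorem~\ref{thm:lotz-martinez} by verifying the Doeblin-type inclusion~\eqref{eq:dual-doeblin-positive} for $T^m$. Concretely, I want to show that whenever a set $S \subseteq L^p_+$ is uniformly $p$-integrable and $\mu$ is finite, then for every $\eta \in [0,1)$ there exists $0 \le u \in L^p$ with $S \subseteq \eta B_+ + [0,u]$.

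To produce such a $u$, I would fix $\eta \in (0,1)$ and use the definition of uniform $p$-integrability of $S := T^m B_+$ to choose $r > 0$ so large that
\begin{equation*}
	\int_{\{f \ge r\}} f^p \dx\mu \;\le\; \eta^p \qquad \text{for all } f \in S.
\end{equation*}
The finiteness of $\mu$ then enters crucially: it guarantees that the constant function $u := r \one$ lies in $L^p$. For every $0 \le f \in S$ I would then split $f$ in the lattice sense as
\begin{equation*}
	f \;=\; (f \wedge u) \;+\; (f-u)^+,
\end{equation*}
noting that $f \wedge u \in [0,u]$ while the nonnegative remainder satisfies $(f-u)^+ \le f \cdot \one_{\{f \ge r\}}$ and hence $\norm{(f-u)^+}_{L^p}^p \le \int_{\{f \ge r\}} f^p \dx\mu \le \eta^p$. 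Thus $(f-u)^+ \in \eta B_+$, which yields the desired inclusion $T^m B_+ \subseteq \eta B_+ + [0,u]$.

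With this inclusion in hand, Theorem~\ref{thm:lotz-martinez} applies directly and gives $\sprEss(T) < 1$. There is essentially no technical obstacle here — the only point to watch out for is that the decomposition $u = r \one$ requires $\mu(\Omega) < \infty$, which is precisely where the finite-measure hypothesis of the corollary is used (and which is the reason why the generalisation to arbitrary measure spaces is not automatic).
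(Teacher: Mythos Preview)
Your argument is correct and is essentially the same as the paper's own proof: both choose $r$ large enough via uniform $p$-integrability, set $u = r\one$, and use the lattice decomposition $f = (f \wedge u) + (f - u)^+$ together with the bound $\norm{(f-u)^+}_{L^p}^p \le \int_{\{f \ge r\}} f^p \dx\mu$ to feed into Theorem~\ref{thm:lotz-martinez}. The only cosmetic difference is that the paper fixes the specific value $\eta = (1/2)^{1/p}$ rather than working with a general $\eta \in (0,1)$.
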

\begin{proof}
	The uniform $p$-integrability of $T^mB_+$ implies that there exists a real number $r \ge 0$ such that $\int_{f \ge r} f^p \dx \mu \le \frac{1}{2}$ for each $f \in T^mB_+$. In particular,
	\begin{align*}
		\norm{f - f \land r\one}_{L^p} \le \left(\int_{f \ge r} f^p \dx \mu \right)^{1/p} \le \left(\frac{1}{2}\right)^{1/p}
	\end{align*}
	for each $f \in T^mB_+$, so it follows that $T^m B_+ \subseteq \left(\frac{1}{2}\right)^{1/p} B_+ + [0, r\one]$. The assertion now follows from Theorem~\ref{thm:lotz-martinez}.
\end{proof}

We note that Corollary~\ref{cor:uniformly-p-integrable} gives a positive answer to a question raised in \cite[Remark~3.6]{ElMachkouri2019}

\begin{corollary} \label{cor:main-result-without-power-boundedness}
	Let $1 < p < q < \infty$ and let $(\Omega,\mu)$ be a finite measure space. Let $T$ be a positive linear operator on $L^p := L^p(\Omega,\mu)$ with spectral radius at most $1$ and assume that, for some $m \in \bbN$, we have $T^mL^p \subseteq L^q := L^q(\Omega,\mu)$. Then $\sprEss(T) < 1$.
\end{corollary}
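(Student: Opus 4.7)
The plan is to reduce the statement to Corollary~\ref{cor:uniformly-p-integrable} by showing that $T^m B_+$ is a uniformly $p$-integrable subset of $L^p$.

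The first step is to verify that $T^m$, now viewed as a linear map from $L^p$ into $L^q$, is bounded. It is everywhere defined by hypothesis, and I would invoke the closed graph theorem. Concretely, if $f_n \to f$ in $L^p$ and $T^m f_n \to g$ in $L^q$, then, since $T^m$ is $L^p$-continuous, $T^m f_n \to T^m f$ in $L^p$; on the other hand, because $(\Omega,\mu)$ is finite and $q > p$, the embedding $L^q(\Omega,\mu) \hookrightarrow L^p(\Omega,\mu)$ is continuous, so $T^m f_n \to g$ in $L^p$ as well. Hence $g = T^m f$, the graph is closed, and $T^m : L^p \to L^q$ is bounded.

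Once this is known, $T^m B_+$ is a norm-bounded subset of $L^q(\Omega,\mu)$. As pointed out in the paragraph preceding Theorem~\ref{thm:lotz-martinez}, every norm-bounded subset of $L^q$ is uniformly $p$-integrable on a finite measure space; this can be seen in one line by combining H\"older's and Chebyshev's inequalities, namely
\begin{align*}
    \int_{\{\modulus{h} \ge r\}} \modulus{h}^p \dx\mu \le \norm{h}_{L^q}^p \cdot \mu\bigl(\{\modulus{h} \ge r\}\bigr)^{1 - p/q} \le \norm{h}_{L^q}^p \cdot \left(\frac{\norm{h}_{L^q}^q}{r^q}\right)^{1 - p/q},
\end{align*}
which tends to $0$ as $r \to \infty$ uniformly for $h$ in any $L^q$-bounded set.

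With $T^m B_+$ thus uniformly $p$-integrable, Corollary~\ref{cor:uniformly-p-integrable} immediately yields $\sprEss(T) < 1$, finishing the proof. The only mildly delicate point in the argument is the appeal to the closed graph theorem; everything else is a direct unravelling of definitions and a standard estimate. I expect no further obstacles.
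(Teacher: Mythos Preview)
Your argument is correct and follows exactly the same route as the paper's proof: invoke the closed graph theorem to get boundedness of $T^m : L^p \to L^q$, observe that the image of the unit ball is then contained in a multiple of the $L^q$-unit ball and hence uniformly $p$-integrable, and apply Corollary~\ref{cor:uniformly-p-integrable}. The only difference is that you spell out the closed graph verification and the H\"older--Chebyshev estimate in more detail than the paper does.
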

\begin{proof}
	It follows from the closed graph theorem that $T^m$ is continuous from $L^p$ to $L^q$, so $T^m$ maps the $L^p$-unit ball into a multiple of the $L^q$-unit ball, which is uniformly $p$-integrable. The assertion thus follows from Corollary~\ref{cor:uniformly-p-integrable}.
\end{proof}

Note that Corollary~\ref{cor:main-result-without-power-boundedness} remains true for the cases $p = 1$ or $q = \infty$ (with the same proof). The reason why we excluded these cases from the statement of the corollary is that much more is actually true if $p=1$ or $q=\infty$; we explain this in Subsection~\ref{subsection:end-points-of-the-l_p-scale} at the end of the paper.

Corollary~\ref{cor:main-result-without-power-boundedness} shows that we can replace the power-boundedness assumption in Theorem~\ref{thm:main-result-introduction} with the weaker assumption that the spectral radius of $T$ is at most $1$. The decision whether one imposes the condition that $L^p$ is mapped into $L^q$ on $T$ itself (as in Theorem~\ref{thm:main-result-introduction}) or on some power of $T$ (as in Corollary~\ref{cor:main-result-without-power-boundedness}) is mainly a matter of taste, since we have $\sprEss(T) = \sprEss(T^m)^{1/m}$ for each $m \in \bbN$.

Two drawbacks of the approach presented in the section are that (i) the theorems of Lotz and Mart\'{i}nez use quite technical machinery from the theory of Banach lattices in their proofs and that (ii) this approach does not permit generalisations to non-positive operators. In the rest of this article we present an approach that is, similar as the theorems of Lotz and Mart\'{i}nez, based on an ultrapower argument that originally goes back to Groh \cite[Proposition~3.2]{Groh1984}, but we replace condition~\eqref{eq:dual-doeblin-positive} with geometric properties of $L^p$-spaces. We thus present a more self-contained proof of Theorem~\ref{thm:main-result-introduction} (and a generalization of it in Theorem~\ref{thm:main-result-positive}) as well as a version of the theorem where positivity of the operator is replaced with a contractivity assumption.

\section{Spectral gaps via ultrapowers} \label{section:spectral-gaps-via-ultra-powers}

In this section we show that a spectral value $\lambda$ of a linear operator $T$ is, under mild assumptions, a pole of the resolvent if the corresponding eigenspace of a ``lifted'' operator $T^\calU$ has finite dimension. For power-bounded $T$ and $\modulus{\lambda} = 1$ this was first proved by Groh \cite[Proposition~3.2]{Groh1984}; later on, Caselles \cite[Proposition~3.3]{Caselles1987} used the theory of Fredholm operators to show that the same result remains true under more general assumptions. In Corollary~\ref{cor:eigenspace-on-ultra-power} we give a slightly more general version of Caselles' result; the main thrust of the argument is still the same, though.

\subsection{A brief reminder of ultrapowers} \label{subsection:ultra-powers}

Let us briefly recall the concept of an ultrapower of a Banach space $E$. Fix a free ultrafilter $\calU$ on $\bbN$, endow the $E$-valued sequence space $\ell^\infty(E)$ with its canonical norm $\norm{z} := \sup_{n \in \bbN} \norm{z_n}_E$ for $z = (z_n)_{n \in\bbN} \in \ell^\infty(E)$, and define
\begin{align*}
	c_{0,\calU}(E) := \{z \in \ell^\infty(E): \; \lim_{n \to \calU} \norm{z_n}_E = 0\}.
\end{align*}
Then $\ell^\infty(E)$ is a Banach space and $c_{0,\calU}(E)$ is a vector subspace of it. The quotient space
\begin{align*}
	E^\calU := \ell^\infty(E)/c_{0,\calU}(E)
\end{align*}
is called the \emph{ultrapower} of $E$ with respect to the ultrafilter $\calU$. For each $z = (z_n)_{n \in \bbN} \in \ell^\infty(E)$ we use the notation $z^\calU$ for the equivalence class of $z$ in $E^\calU$. Moreover, for $x \in E$ we use the notation $x^\calU$ for the equivalence class of the constant sequence $(x)_{n \in \bbN} \in \ell^\infty(E)$ in $E^\calU$. The mapping $E \ni x \mapsto x^\calU \in E^\calU$ is isometric, and via this mapping we may consider $E$ as a closed subspace of $E^\calU$. For every $z \in \ell^\infty(E)$ we can compute the norm of $z^\calU$ in $E^\calU$ by means of the formula $\norm{z^\calU} = \lim_{n \to \calU} \norm{z_n}_E$.

If $E$ is a Banach lattice, then so is $\ell^\infty(E)$, and then the space $c_{0,\calU}(E)$ is a closed ideal in $\ell^\infty(E)$. Thus, the ultrapower $E^\calU$ is a Banach lattice, too, and the embedding $E \ni x \mapsto x^\calU \in E^\calU$ is an isometric lattice homomorphism in this case. The formula $\norm{z^\calU} = \lim_{n \to \calU} \norm{z_n}_E$ for $z \in \ell^\infty(E)$ implies that, if $E$ is an $L^p$-space over some measure space for $p \in [1,\infty)$, then $E^\calU$ is an abstract $L^p$-space and thus isometrically lattice isomorphic to a concrete $L^p$-space by the representation theorem for abstract $L^p$-spaces \cite[Theorem~2.7.1]{Meyer-Nieberg1991}.

Let $E,F$ be Banach spaces. Every bounded linear operator $T: E \to F$ can be canonically extended to a bounded linear operator $T^\calU: E^\calU \to F^\calU$ which is given by $T^\calU z^\calU = (Tz_n)^\calU$ for each $z = (z_n) \in \ell^\infty(E)$. If $E = F$, then the mapping $T \mapsto T^\calU$ is an isometric and unital Banach algebra homomorphism from $\calL(E)$ into $\calL(E^\calU)$. If $E$ and $F$ are Banach lattices, then $T$ is positive if and only if $T^\calU$ is positive; likewise, $T$ is a lattice homomorphism if and only if $T^\calU$ is a lattice homomorphism.

Ultrapowers are an important tool in operator theory. One of their very useful properties is that the lifting $T \mapsto T^\calU$ improves the behaviour of certain parts of the spectrum of $T$ without changing the spectrum as a whole. For instance,  every approximate eigenvalue of $T$ is an eigenvalue of $T^\calU$; see e.g.\ \cite[Section~V.1]{Schaefer1974} and \cite[Theorem~4.1.6]{Meyer-Nieberg1991} for details. In the next subsection we discuss another useful fact, namely that the dimension of the eigenspaces of $T^\calU$ contains information about certain spectral properties of $T$ (Corollary~\ref{cor:eigenspace-on-ultra-power}).

\subsection{Eigenspaces on ultrapowers} \label{subsection:eigenspaces-on-ultra-powers}

Corollary~\ref{cor:eigenspace-on-ultra-power} of the following theorem is the key ingredient for the proofs of our main results in Section~\ref{section:spectral-gaps-for-hyperbounded-operators}.

\begin{theorem} \label{thm:kernel-on-ultra-power}
	Let $E,F$ be Banach spaces, let $T: E \to F$ be a bounded linear operator and let $\calU$ be a free ultrafilter on $\bbN$. If the kernel of $T^\calU$ is finite dimensional, then the range of $T$ is closed.
\end{theorem}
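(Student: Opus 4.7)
The plan is to prove the contrapositive: assuming $\operatorname{ran} T$ is not closed, I will exhibit infinitely many linearly independent elements in $\ker T^\calU$. As a preliminary observation, the canonical isometric embedding $E \hookrightarrow E^\calU$ maps $\ker T$ into $\ker T^\calU$, so the hypothesis already forces $\ker T$ to be finite-dimensional. The induced injective operator $\hat T \colon E/\ker T \to F$ has the same range as $T$, and non-closedness of $\operatorname{ran} T$ is equivalent to $\hat T$ failing to be bounded below. A compactness argument then shows that $E/\ker T$ must be infinite-dimensional, since otherwise a convergent subsequence of an almost-kernel unit sequence would produce a nonzero vector in $\ker \hat T$.

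For each $n \in \bbN$ I would next pick $\hat y_n \in E/\ker T$ with $\|\hat y_n\| = 1$ together with a lift $y_n \in E$ satisfying $\|y_n\| \leq 2$ and $\|T y_n\|_F \leq 1/n$. The sequence $(\hat y_n)$ has no norm-Cauchy subsequence, since such a subsequence would converge to a nonzero element of $\ker \hat T$. The Bessaga--Pe\l{}czy\'nski selection principle therefore yields a basic subsequence; relabel so that $(\hat y_n)$ itself is basic with basis constant $K$. For each $j \in \bbN$ I then define the shifted ultrapower element
\[
z_j \;:=\; (y_{n+j})_{n \in \bbN}^{\calU} \;\in\; E^\calU.
\]
Each $z_j$ lies in $\ker T^\calU$ because $\|T y_{n+j}\|_F \leq 1/(n+j) \to 0$ as $n \to \infty$. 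To establish linear independence, apply the ultrapower extension $q^\calU \colon E^\calU \to (E/\ker T)^\calU$ of the quotient map $q \colon E \to E/\ker T$: a finite relation $\sum_{j=1}^J c_j z_j = 0$ in $E^\calU$ would yield $\lim_{n \to \calU} \bigl\| \sum_{j=1}^J c_j \hat y_{n+j} \bigr\|_{E/\ker T} = 0$, while the basic-sequence estimate $|c_j| \leq 2K \bigl\| \sum_{i=1}^J c_i \hat y_{n+i} \bigr\|$ (valid for every $n \geq 0$ because every tail of a basic sequence retains essentially the same basis constant) forces every $c_j$ to vanish. The resulting infinite linearly independent family $\{z_j : j \in \bbN\}$ inside $\ker T^\calU$ contradicts the finite-dimensionality hypothesis.

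The delicate step is the extraction of the basic subsequence in the second paragraph: this is the only place where genuine Banach space geometry enters, as opposed to formal ultrapower manipulations. Once a basic sequence is at hand, the ``index-shift'' construction $z_j = (y_{n+j})^\calU$ is essentially forced and converts a single asymptotic kernel sequence into infinitely many linearly independent witnesses inside the ultrapower, precisely because basic sequences are quantitatively decorrelated in the manner encoded by the basis constant.
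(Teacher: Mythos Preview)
Your overall architecture matches the paper's proof: both argue by contraposition, pass to the injective quotient operator on $E/\ker T$, produce an approximate kernel sequence, and then use the index-shift construction $z_j = ((y_{n+j})_n)^\calU$ to populate $\ker T^\calU$. The difference lies in the extraction step, and that is where your argument has a genuine gap.

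The Bessaga--Pe\l{}czy\'nski selection principle, in its standard form, applies to a seminormalised sequence that converges \emph{weakly to zero}; it does not apply to an arbitrary bounded sequence without norm-Cauchy subsequences. A concrete obstruction: in $\ell^2$ with unit vectors $(e_k)_{k\ge 0}$, the sequence $x_n := e_0 + e_n$ is normalised up to a constant, satisfies $\lVert x_n - x_m\rVert = \sqrt{2}$ for $n\neq m$, yet has no basic subsequence. Indeed, for any subsequence and coefficients $c_1=\dots=c_N=1$, $c_{N+1}=\dots=c_{2N}=-1$, the $N$-th partial sum has norm $\sqrt{N^2+N}$ while the full sum has norm $\sqrt{2N}$, so the basis constant is at least $\sqrt{(N+1)/2}\to\infty$. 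Your sequence $(\hat y_n)$ is not known to be weakly null, so the principle does not apply as stated.

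The paper avoids this issue entirely by extracting something much weaker than a basic subsequence. Since $(x_n)\subseteq E$ (the lifts, with $\operatorname{dist}(x_n,\ker T)=1$) has no convergent subsequence, the set $\{x_n\}$ is not totally bounded; hence there exist $\varepsilon>0$ and a subsequence $(y_n)$ with $\lVert y_n - y_m\rVert \ge \varepsilon$ for all $n\neq m$. Then the shifted elements $y^{(k)} := ((y_{n+k})_n)^\calU$ satisfy
\[
\lVert y^{(k)} - y^{(j)}\rVert \;=\; \lim_{n\to\calU}\lVert y_{n+k} - y_{n+j}\rVert \;\ge\; \varepsilon
\]
for $j\neq k$, so $\{y^{(k)}:k\in\bbN\}$ is an $\varepsilon$-separated subset of $\ker T^\calU$, forcing $\ker T^\calU$ to be infinite dimensional. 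This is both simpler and more robust than the basic-sequence route: no Banach space geometry is needed beyond the trivial fact that a bounded non-totally-bounded set contains an $\varepsilon$-net-free infinite subset.
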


As mentioned before, the following proof is based on an idea taken from \cite[Proposition~3.2]{Groh1984} (see also \cite[Lemma~C-III-3.10]{Arendt1986}). Moreover, the author is indebted to Manfred Wolff for several suggestions which eventually lead to the above very general form of the theorem.

\begin{proof}[Proof of Theorem~\ref{thm:kernel-on-ultra-power}]
	Assume that the range of $T$ is not closed in $F$ and let $\tilde T: E/\ker T \to F$ denote the canonical operator induced by $T$. The range of $\tilde T$ is not closed (as it coincides with the range of $T$), so $\tilde T$ is not bounded below. Hence, there exists a sequence $(\tilde x_n)_{n \in \bbN} \subseteq E/\ker T$ of normalised vectors such that $\tilde T \tilde x_n \to 0$ in $F$ as $n \to \infty$.
	
	Every vector $\tilde x_n$ has a representative $x_n$ in $E$ whose norm is not larger than $2$. Note that the distance of $x_n$ to $\ker T$ equals $1$ and that $Tx_n = \tilde T \tilde x_n \to 0$ as $n \to \infty$. Moreover, the sequence $(x_n)_{n \in \bbN}$ does not have any convergent subsequence, for if $x \in E$ was the limit of such a subsequence then $x \in \ker T$, which contradicts the fact that each $x_n$ has distance $1$ to $\ker T$.
	
	Hence, the set $\{x_n: \, n \in \bbN\}$ is not relatively compact in $E$, so there exists a number $\varepsilon > 0$ such that $\{x_n: \, n \in \bbN\}$ cannot be covered by finitely many open balls of radius $\varepsilon$. Therefore we can find a subsequence $(y_n)_{n \in \bbN}$ of $(x_n)_{n \in \bbN}$ such that $\norm{y_n - y_m} \ge \varepsilon$ for all distinct $m,n \in \bbN$. For each $k \in \bbN$ we define
	\begin{align*}
		y^{(k)} := \big((y_{n+k})_{n \in \bbN}\big)^\calU \in E^\calU.
	\end{align*}
	Then each $y^{(k)}$ is an element of $\ker(T^\calU)$ and its norm is between $1$ and $2$. However, for $j \not= k$ we obtain
	\begin{align*}
		\norm{y^{(k)} - y^{(j)}} = \lim_{n \to \calU} \norm{y_{n+k} - y_{n+j}} \ge \varepsilon,
	\end{align*}
	so the sequence $(y^{(k)})_{k \in \bbN}$ does not have a convergent subsequence. Hence, $\ker(T^\calU)$ is infinite dimensional.
\end{proof}

\begin{corollary} \label{cor:eigenspace-on-ultra-power}
	Let $T$ be a bounded linear operator on a Banach space $E$ and let $\lambda \in \bbC$ be an element of the topological boundary of $\spec(T)$. Assume that, for a free ultrafilter $\calU$ on $\bbN$, the eigenspace $\ker(\lambda - T^\calU)$ is finite dimensional.
	
	Then $\lambda$ is a pole of the resolvent $\Res(\argument,T)$ and the algebraic multiplicity of $\lambda$ as an eigenvalue of $T$ is finite.
\end{corollary}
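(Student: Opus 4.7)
The plan is to leverage Theorem~\ref{thm:kernel-on-ultra-power} to show that $\lambda - T$ is upper semi-Fredholm, upgrade this to Fredholmness of index zero using the boundary assumption, and then invoke classical Fredholm theory to conclude that $\lambda$ is a pole of finite algebraic multiplicity.

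First, since the lifting $S \mapsto S^\calU$ is a unital Banach algebra homomorphism from $\calL(E)$ into $\calL(E^\calU)$, one has $(\lambda - T)^\calU = \lambda - T^\calU$, and the hypothesis thus states that $\ker\bigl((\lambda - T)^\calU\bigr)$ is finite dimensional. Theorem~\ref{thm:kernel-on-ultra-power}, applied to $\lambda - T$, yields that the range of $\lambda - T$ is closed in $E$. Moreover, the canonical isometric embedding $E \hookrightarrow E^\calU$ restricts to an embedding $\ker(\lambda - T) \hookrightarrow \ker(\lambda - T^\calU)$, so $\ker(\lambda - T)$ is finite dimensional as well. Consequently $\lambda - T$ is upper semi-Fredholm.

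Next I would use the standard fact that the set of upper semi-Fredholm operators is open in $\calL(E)$ and that the Fredholm index is locally constant on it. Because $\lambda$ lies on the topological boundary of $\spec(T)$, every neighbourhood of $\lambda$ contains resolvent points $\mu$, at which $\mu - T$ is invertible and hence Fredholm of index $0$. Local constancy of the index forces $\operatorname{ind}(\lambda - T) = 0$, so $\lambda - T$ is Fredholm.

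Finally, the classical punctured neighbourhood theorem for Fredholm operators provides an open disc around $\lambda$ on which, away from $\lambda$, both $\dim \ker(\mu - T)$ and $\operatorname{codim} \operatorname{range}(\mu - T)$ are constant; these constants must vanish because the disc meets $\rho(T)$. Hence $\lambda$ is an isolated point of $\spec(T)$. Since each power $(\lambda - T)^n$ is itself Fredholm, its kernel is finite dimensional, and the spectral projection at $\lambda$ has finite-dimensional range; this forces $\lambda$ to be a pole of $\Res(\argument,T)$ of finite algebraic multiplicity.

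The substantive new input is Step~1: the ultrapower step that turns a finite-dimensional lifted eigenspace into closed range downstairs. The main obstacle will be assembling the Fredholm-theoretic material cleanly, in particular the local constancy of the index and the punctured neighbourhood theorem, and ensuring that the passage from \emph{isolated point of the spectrum with $\lambda - T$ Fredholm} to \emph{pole of finite algebraic multiplicity} is not applied circularly. These are standard ingredients in the style of Caselles' \cite[Proposition~3.3]{Caselles1987} and should be invoked as black boxes from Fredholm spectral theory.
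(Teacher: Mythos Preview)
Your proposal is correct and follows essentially the same route as the paper's proof: Theorem~\ref{thm:kernel-on-ultra-power} gives closed range, the embedding $E \hookrightarrow E^\calU$ gives finite-dimensional kernel and hence upper semi-Fredholmness, the boundary assumption and local constancy of the index upgrade this to Fredholm of index zero, and then standard analytic Fredholm theory finishes. The only difference is cosmetic: where you spell out the punctured-neighbourhood argument and the finiteness of $\ker(\lambda-T)^n$, the paper simply cites analytic Fredholm theory (Gohberg--Goldberg--Kaashoek) as a black box.
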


It is not difficult to see that the converse implication is also true in Corollary~\ref{cor:eigenspace-on-ultra-power}; since we do not need this observation in what follows, we do not discuss this in detail, though.

\begin{proof}[Proof of Corollary~\ref{cor:eigenspace-on-ultra-power}]
	By Theorem~\ref{thm:kernel-on-ultra-power} the operator $\lambda - T$ has closed range. The kernel of $\lambda-T$ is finite-dimensional as it can be identified with a subspace of $\ker(\lambda - T^\calU)$, so $\lambda - T$ is an \emph{upper semi-Fredholm operator}. 
	
	It is well-known that the set of semi-Fredholm operators is open in $\calL(E)$ and that the Fredholm index is constant on connected components of this set, see for instance \cite[Theorem~IV.5.17 on p.\,235]{Kato1995}. Since $\lambda$ is a boundary point of $\spec(T)$ we can find a sequence of bijective operators in $\calL(E)$ which approximates $\lambda - T$ in operator norm. Hence, $\lambda - T$ is actually a Fredholm operator with Fredholm index zero. The assertion thus follows from analytic Fredholm theory, see for instance \cite[Corollary~XI.8.4 on p.\,203]{Gohberg1990}.
\end{proof}

\section{Spectral gaps for hyperbounded operators} \label{section:spectral-gaps-for-hyperbounded-operators}

\subsection{Positive operators}

Theorem~\ref{thm:main-result-introduction} in the introduction is a special case of the following more general result.

\begin{theorem} \label{thm:main-result-positive}
	Let $p,q \in (1,\infty)$ be two distinct numbers and let $(\Omega_1,\mu_1)$ and $(\Omega_2,\mu_2)$ be two arbitrary measure spaces. Moreover, let $j: L^q := L^q(\Omega_2,\mu_2) \to L^p := L^p(\Omega_1,\mu_1)$ be a lattice homomorphism (i.e.\ $\modulus{j(f)} = j(\modulus{f})$ for all $f \in L^q$).
	
	If $T$ is a positive and power-bounded linear operator on $L^p$ and if $T(L^p) \subseteq j(L^q)$, then $\sprEss(T) < 1$.
\end{theorem}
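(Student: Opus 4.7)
The plan is to combine Corollary~\ref{cor:eigenspace-on-ultra-power} with the Banach-space fact, used as a black box and recalled in the appendix, that an infinite-dimensional $L^p$-space cannot be isomorphic to an $L^q$-space when $p \neq q$. Power-boundedness of $T$ forces $r(T) \le 1$, so any $\lambda \in \spec(T)$ with $|\lambda| \ge 1$ must lie on the unit circle and, in particular, on the topological boundary of $\spec(T)$. By Corollary~\ref{cor:eigenspace-on-ultra-power}, it therefore suffices to show that $E_\lambda := \ker(\lambda - T^\calU)$ is finite-dimensional for every such $\lambda$ and a fixed free ultrafilter $\calU$ on $\bbN$.

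First I would lift the hypothesis $T(L^p) \subseteq j(L^q)$ to the ultrapower. Since $\ker j$ is a closed ideal in $L^q$, the quotient $L^q/\ker j$ is again an abstract $L^q$-space and $j$ factors as $\bar j \circ \pi$ for an injective lattice homomorphism $\bar j : L^q/\ker j \to L^p$. The closed graph theorem yields a bounded operator $\tilde T : L^p \to L^q/\ker j$ with $\bar j \tilde T = T$, and taking ultrapowers gives $T^\calU((L^p)^\calU) \subseteq j^\calU((L^q)^\calU)$. The relation $E_\lambda = \lambda^{-1} T^\calU(E_\lambda)$ then places $E_\lambda$ inside $j^\calU((L^q)^\calU)$.

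The core step is to endow $\fix T^\calU$ with two equivalent Banach-space norms that realise it simultaneously as an abstract $L^p$- and an abstract $L^q$-space. After an equivalent lattice renorming of $L^p$ that makes $T$ contractive, the fixed space $\fix T^\calU$ becomes a sublattice of $(L^p)^\calU$ by the classical argument $T|f| \ge |f|$ together with strict monotonicity of the $L^p$-norm on positive elements; since being a sublattice is purely algebraic, this property persists in the original $L^p$-norm, under which $\fix T^\calU$ is a closed sublattice of the abstract $L^p$-space $(L^p)^\calU$ and hence itself an abstract $L^p$-space. On the other hand, $\bar j^\calU$ identifies $j^\calU((L^q)^\calU)$ lattice-theoretically with the abstract $L^q$-space $(L^q)^\calU / \ker j^\calU$, equipping it with an $L^q$-quotient norm that dominates the ambient $L^p$-norm; the subspace $\fix T^\calU$ is then closed and still a sublattice with respect to this second norm, hence an abstract $L^q$-space in that norm. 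The open mapping theorem applied to the identity map between the two norms on $\fix T^\calU$ forces their equivalence, and the Banach-space black box yields $\dim \fix T^\calU < \infty$.

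The passage from $\lambda = 1$ to arbitrary unimodular $\lambda \in \spec(T)$ is supplied by standard peripheral-spectrum theory for positive contractions on Banach lattices: the modulus map $f \mapsto |f|$ together with the cyclicity of the peripheral point spectrum embeds each $E_\lambda$ into $\fix T^\calU$ as a real-linear copy of a sublattice, so $\dim E_\lambda \le \dim \fix T^\calU < \infty$. Applying Corollary~\ref{cor:eigenspace-on-ultra-power} at each unimodular spectral value then yields $\sprEss(T) < 1$. The main technical obstacle I anticipate lies in the renorming bookkeeping of the previous paragraph, namely ensuring that the sublattice property of $\fix T^\calU$ obtained via a contraction-inducing renorming really transports back to the original $L^p$-norm and interacts cleanly with the double role of $j^\calU((L^q)^\calU)$ as a sublattice of $(L^p)^\calU$ and a lattice image of $(L^q)^\calU$.
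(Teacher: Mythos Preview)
Your overall strategy --- lift to the ultrapower, show that $\fix T^\calU$ is finite-dimensional, and then invoke Corollary~\ref{cor:eigenspace-on-ultra-power} --- is exactly the route the paper takes. However, the ``core step'' in your third paragraph does not go through as written, and the reduction from $\lambda=1$ to arbitrary unimodular $\lambda$ in the last paragraph is not justified.

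\textbf{The sublattice problem.} You propose an equivalent lattice renorming of $L^p$ making $T$ contractive and then invoke ``strict monotonicity of the $L^p$-norm'' to conclude that $\fix T^\calU$ is a sublattice. But the classical argument you allude to needs contractivity and strict monotonicity \emph{in the same norm}: from $T^\calU\modulus{f}\ge\modulus{f}$ one wants $\lVert T^\calU\modulus{f}\rVert\le\lVert\modulus{f}\rVert$ together with strict monotonicity to force equality. A renorming such as $\lVert f\rVert':=\sup_{n\ge 0}\lVert T^n\modulus{f}\rVert_p$ does make $T$ contractive, but it is in general neither strictly monotone nor an $L^p$-norm, and strict monotonicity need not survive the passage to the ultrapower for an arbitrary lattice norm. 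So neither norm does both jobs, and the asserted sublattice property of $\fix T^\calU$ is not established. The paper handles this by a different device (Lemma~\ref{lem:fixed-space-of-positive-operators}): it uses mean ergodicity of the power-bounded operator on the reflexive $L^p$-space to obtain the positive mean ergodic projection $P$, factors out the absolute kernel $I=\{f:P\modulus{f}=0\}$, and shows that on the quotient $L^p$-space the induced operator has a sublattice fixed space, lattice-isomorphic to $\fix T$. Only then does the comparison with an $L^q$-space apply (via Proposition~\ref{prop:L_p-and-L_q-are-non-lattice-isomorphic}, not the deeper Banach-space black box).

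\textbf{Unimodular $\lambda\ne 1$.} The modulus map $f\mapsto\modulus{f}$ is not linear, so saying it ``embeds each $E_\lambda$ into $\fix T^\calU$'' does not bound $\dim E_\lambda$; moreover, without contractivity of $T^\calU$ in the $L^p$-norm you do not even know that $\modulus{f}\in\fix T^\calU$ for $f\in E_\lambda$. The paper does not attempt to control $E_\lambda$ directly. Instead it first shows, via Corollary~\ref{cor:eigenspace-on-ultra-power} and power-boundedness, that $1$ is a first-order pole of the resolvent with finite-rank spectral projection, then quotes the Niiro--Sawashima theorem \cite[Theorem~V.5.5]{Schaefer1974} to conclude that every peripheral spectral value is a pole, and finally invokes a separate structural result (\cite[Theorem~2]{Lin1978} or \cite[Theorem~5.3]{Glueck2016}) to obtain finite-dimensionality of all peripheral eigenspaces. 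Your shortcut via the modulus would need a substantial additional argument to be made rigorous.
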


In Theorem~\ref{thm:uniform-lower-bound-for-spectral-gap-positive} we will give a uniform version of this result. The proof of Theorem~\ref{thm:main-result-positive} relies on the ultrapower approach from Subsection~\ref{subsection:eigenspaces-on-ultra-powers} and on the following lemma:

\begin{lemma} \label{lem:fixed-space-of-positive-operators}
	Under the assumptions of Theorem~\ref{thm:main-result-positive} the fixed space $\fix T$ is finite dimensional.
\end{lemma}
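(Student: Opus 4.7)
The plan is a proof by contradiction. Assuming $\fix T$ is infinite dimensional, the hyperbounded factorization together with positivity will produce infinite-dimensional copies of $\ell^p$ and $\ell^q$ as Banach-isomorphic subspaces of $L^p$ and $L^q$, respectively, contradicting the classical fact that $\ell^p \not\cong \ell^q$ for $p \neq q$. First, I reduce to the case that $j$ is injective by replacing $L^q$ with $L^q / \ker j$, which is itself an $L^q$-space since $\ker j$ is a closed lattice ideal. Then the hypothesis $T L^p \subseteq j(L^q)$ together with the closed graph theorem yields a factorization $T = j \circ S$ for a bounded linear map $S \colon L^p \to L^q$, which is positive because $j$ is a positive order-injective lattice homomorphism and $T$ is positive. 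A short computation shows that $S$ and $j$ restrict to mutually inverse bounded maps between $\fix T \subseteq L^p$ and $\fix(S \circ j) \subseteq L^q$: for $f \in \fix T$, the vector $g := Sf$ satisfies $jg = Tf = f$ and $(Sj)g = Sf = g$; conversely, for $g \in \fix(Sj)$, one has $T(jg) = j(Sj)g = jg$, so $jg \in \fix T$.

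Assuming for contradiction that $\fix T$ is infinite dimensional, I would then construct a pairwise disjoint sequence $(f_n)_{n \ge 1}$ of nonzero elements of $\fix T_+ := \fix T \cap L^p_+$, i.e.\ with $f_n \wedge f_m = 0$ in $L^p$ for $n \neq m$. Setting $g_n := S f_n \in L^q_+$, these are nonzero (since $j g_n = f_n \neq 0$) and pairwise disjoint in $L^q$: since $j$ is an injective lattice homomorphism, $j(g_n \wedge g_m) = j g_n \wedge j g_m = f_n \wedge f_m = 0$ yields $g_n \wedge g_m = 0$. The closed linear spans $E_p := \overline{\linSpan}\{f_n : n \ge 1\} \subseteq L^p$ and $E_q := \overline{\linSpan}\{g_n : n \ge 1\} \subseteq L^q$ are then closed sublattices with disjoint, positive, unconditional bases, hence lattice isometric to weighted versions of $\ell^p$ and $\ell^q$, respectively. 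The restrictions of $S$ to $E_p$ and of $j$ to $E_q$ send $f_n \leftrightarrow g_n$ and extend to mutually inverse Banach isomorphisms $E_p \cong E_q$, giving $\ell^p \cong \ell^q$ and contradicting $p \neq q$.

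The main obstacle is the construction of the disjoint positive sequence in $\fix T_+$. Because $T$ is power bounded on the reflexive space $L^p$, its mean ergodic projection $P$ exists and is a positive bounded projection onto $\fix T$; in particular $\fix T_+ = P(L^p_+)$ real-linearly spans the real part of $\fix T$. However, positive projections on $L^p$ need not have sublattices as their ranges, so one cannot simply pick disjoint characteristic functions $\one_{A_n}$ and hope that their images $P\one_{A_n}$ remain disjoint in $L^p$. Producing the disjoint system instead requires a careful support-exhaustion argument inside the Banach lattice $L^p$, invoking positivity of $T$, the mean ergodic structure, and lattice-theoretic properties of the carriers of elements of $\fix T_+$; this is precisely the step where the geometric features of $L^p$- and $L^q$-spaces are brought to bear, ultimately driving the dichotomy $\ell^p \not\cong \ell^q$.
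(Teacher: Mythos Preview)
Your setup is correct and mirrors the paper's: the reduction to injective $j$, the factorization $T = jS$ with $S = j^{-1}T$ bounded and positive, and the bijection between $\fix T$ and $\fix(Sj)$ via $S$ and $j$ are all fine. The final step---given pairwise $L^p$-disjoint $f_n \in \fix T_+$, deduce that $(g_n) = (Sf_n)$ are $L^q$-disjoint and hence that $\ell^p \cong \ell^q$---is also correct.

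The gap is exactly the one you name yourself: you have \emph{not} produced the $L^p$-disjoint sequence $(f_n)$ in $\fix T_+$, and there is no reason to expect one exists. The fixed space of a positive power-bounded operator on $L^p$ need not be a sublattice of $L^p$; it is perfectly possible that every nonzero element of $\fix T_+$ has full support on some common set, so that no two are disjoint in $L^p$. Your last paragraph gestures at a ``support-exhaustion argument'' but does not carry it out, and the phrase ``this is precisely the step where the geometric features \ldots\ are brought to bear'' is circular: the $\ell^p \not\cong \ell^q$ dichotomy only enters \emph{after} the disjoint sequence is in hand, not in its construction.

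The paper closes this gap not by finding disjoint elements in $\fix T$ itself, but by passing to a quotient. One forms the absolute kernel $I = \{f : P\modulus{f} = 0\}$ of the mean ergodic projection $P$, notes that $L^p/I$ is again an $L^p$-space, and checks that the induced operator $T_/$ has $\fix T_/$ lattice-isomorphic to $\fix T$ via the quotient map. The crucial point is that $\fix T_/$ \emph{is} a sublattice of $L^p/I$: for $f \in \fix T$ one has $T\modulus{f} - \modulus{f} \ge 0$ and $P(T\modulus{f} - \modulus{f}) = 0$, so $T\modulus{f} - \modulus{f} \in I$. Once $\fix T_/$ is a sublattice, the representation theorem for abstract $L^p$-spaces identifies it with a concrete $L^p$-space, and likewise $\fix(Sj)_/$ with an $L^q$-space; the restriction of $j$ is then a lattice isomorphism between them, which forces finite dimension. (The paper notes that if $T$ is contractive this quotient step is unnecessary, since then $\fix T$ is already a sublattice---which is the situation in which your direct approach would go through.)
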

\begin{proof}
	(1) We first show that the assertion is true if $j$ is injective and if $\fix T$ is a \emph{sublattice} of $L^p$, meaning that $\modulus{f} \in \fix T$ for each $f \in \fix T$. 
	
	To this end, first note that $j^{-1}T: L^p \to L^q$ is continuous by the closed graph theorem. In particular, the operator $S := j^{-1}Tj: L^q \to L^q$ is continuous. Moreover, a vector $f \in L^q$ is in the fixed space of $S$ if and only if $j(f)$ is in the fixed space of $T$, i.e.\ $\fix S = j^{-1}(\fix T)$.
	
	The spaces $\fix T$ and $\fix S$ are closed in $L^p$ and $L^q$, respectively. As $\fix T$ is a sublattice of $L^p$, it follows that $\fix S$ is a sublattice of $L^q$ since $j$ is a lattice homomorphism. Now it follows from the representation theorem for abstract $L^p$-spaces \cite[Theorem~2.7.1]{Meyer-Nieberg1991} that $\fix T$, with the norm induced by $L^p$, is itself isometrically lattice isomorphic to an $L^p$-space over some measure space, and likewise it follows that $\fix S$, with the norm induced by $L^q$, is isometrically lattice isomorphic to an $L^q$-space over some measure space.
	
	Yet, the mapping $j|_{\fix S}: \fix S \to \fix T$ is bijective and a lattice homomorphism, hence a lattice isomorphism. As $p \not= q$, Proposition~\ref{prop:L_p-and-L_q-are-non-lattice-isomorphic} in the Appendix shows that this can only be true of $\fix T$ is finite dimensional. \smallskip 
	
	(2) Now we drop the additional assumptions that $j$ be injective and that the fixed space of $T$ be a sublattice of $L^p$; we reduce this general situation to the simpler situation in step~(1).
	
	Since $T$ is power bounded and $L^p$ is reflexive, it follows that $T$ is \emph{mean ergodic}, meaning that the Ces{\`a}ro means $\frac{1}{n}\sum_{k=0}^{n-1} T^k$ converge strongly to a projection $P$ on $L^p$ as $n \to \infty$. The range of $P$ coincides with the fixed space of $T$ and clearly, $P$ is positive. We now define the set $I$ to be the \emph{absolute kernel} of $P$, i.e.\ $I := \{f \in L^p: \, P\modulus{f} = 0\}$. Then $I$ is a closed ideal in the Banach lattice $L^p$ and it is invariant under $T$ and $P$. The factor space $L^p/I$ is again a Banach lattice, and in fact isometrically lattice isomorphic to an $L^p$-space over another measure space. By $q: L^p \to L^p/I$ we denote the quotient mapping; then $q$ is a lattice homomorphism.
	
	Let $T_/,P_/: L^p/I \to L^p/I$ denote the operators induced on the factor space by $T$ and $P$, respectively. We claim that $q|_{\fix T}$ is a bijection from $\fix T$ to $\fix T_/$. Clearly, $q$ maps fixed points of $T$ to fixed points of $T_/$. Moreover, if $f \in \fix T$ and $q(f) = 0$, then $\modulus{f} = \modulus{Pf} \le P\modulus{f} = 0$, so $f = 0$; this shows that $q|_{\fix T}$ is injective. To show surjectivity, note that $T_/$ is mean ergodic with mean ergodic projection $P_/$, so $\fix T_/$ is the range of $P_/$. If $q(f) \in \fix T_/$ (for some $f \in L^p$), we thus have $q(f) = P_/q(f) = q(Pf)$, so $q(f)$ is the image of the vector $Pf \in \fix T$ under $q$.
	
	Since $\fix T$ and $\fix T_/$ are isomorphic vector spaces, it suffices to prove that the fixed space of $T_/$ is finite dimensional. Now, observe that $\fix T_/$ is a sublattice of $L^p/I$: as shown above, every vector in $\fix T_/$ can be written as $q(f)$ for some $f \in \fix T$; we conclude
	\begin{align*}
		T_/ \modulus{q(f)} - \modulus{q(f)} = T_/ q(\modulus{f}) - q(\modulus{f}) = q\left(T\modulus{f} - \modulus{f}\right) = 0
	\end{align*}
	since $P\modulus{T\modulus{f} - \modulus{f}} = TP\modulus{f} - P\modulus{f} = 0$ (where we used that $T\modulus{f} \ge \modulus{f}$ as $T$ is positive and $f \in \fix T$).
	
	Finally, consider the lattice homomorphism $q \circ j: L^q \to L^p/I$, whose range contains the range of $T_/$. We note that $\ker(q \circ j)$ is a closed ideal in $L^q$. Factoring out this kernel in $L^q$ we obtain an induced injective lattice homomorphism $\tilde j: L^q/\ker(q \circ j) \to L^p/I$ with the same range as $q \circ j$. Since $L^q/\ker(q \circ j)$ is also (isometrically lattice isomorphic to) an $L^q$-space, we are now in the situation of step~(1) and can thus conclude that $\fix T_/$ is finite dimensional.
\end{proof}

\begin{remarks_no_number}
	(a) If $T$ is contractive in the above lemma, then we do not need to factor out the ideal $I$ in $L^p$ since it is easily seen that the fixed space of every positive contraction on $L^p$ is a sublattice.
	
	(b) The reduction argument in step~(2) of the proof of Lemma~\ref{lem:fixed-space-of-positive-operators} is not uncommon in the theory of positive operators. For instance, a similar argument is used at the end of the proof of \cite[Theorem~2]{Lin1978}.
\end{remarks_no_number}

For the case of self-adjoint operators on $L^2$, a version of Lemma~\ref{lem:fixed-space-of-positive-operators} (under the assumption that $(\Omega_1,\mu_1) = (\Omega_2, \mu_2)$ is a finite measure space) has been proved by Gross as a part of Theorem~1 in \cite{Gross1972}; in this reference, an explicit bound of the dimension of $\fix T$ in terms of the operator norm of $T$ as an operator from $L^2$ to $L^q$ is given.

\begin{proof}[Proof of Theorem~\ref{thm:main-result-positive}]
	If the spectral radius of $T$ is strictly smaller than $1$, there is nothing to prove, so we assume throughout the proof that $T$ has spectral radius $1$. In particular, $1$ is a spectral value of $T$ since $T$ is positive \cite[Proposition~V.4.1]{Schaefer1974}. 
	
	Note that the kernel of $j$ is a closed ideal in $L^q$ since $j$ is a lattice homomorphism; thus $L^q/\ker j$ is (isometrically lattice isomorphic to) another $L^q$-space, and $j$ induces an injective lattice homomorphism $L^q/\ker j \to L^p$ with the same range as $j$. Therefore, we may assume throughout the rest of the proof that $j$ is injective.
	
	Fix a free ultrafilter $\calU$ on $\bbN$. We first show that the fixed space of the operator $T^\calU$ on $(L^p)^\calU$ is finite dimensional. To this end, we are going to employ Lemma~\ref{lem:fixed-space-of-positive-operators}, but to the operator $T^\calU$ on the $L^p$-space $(L^p)^\calU$ (rather than to the operator $T$ on $L^p$). Note that the space $(L^p)^\calU$ is itself (isometrically lattice isomorphic to) an $L^p$-space over some measure space, and that $T^\calU$ is power-bounded as $T$ is so; hence we have to find an appropriate $L^q$-space and an appropriate lattice homomorphism in order to make the lemma work.
	
	The space $(L^q)^\calU$ is (isometrically lattice isomorphic to) an $L^q$-space over some measure space, and the mapping $j^\calU: (L^q)^\calU \to (L^p)^\calU$ is a lattice homomorphism. Moreover, the range of $T^\calU$ is contained in the range of $j^\calU$. Indeed, let $f^\calU \in (L^p)^\calU$ where $f = (f_n)_{n \in \bbN} \in \ell^\infty(L^p)$. It follows from the closed graph theorem that the mapping $j^{-1}T: L^p \to L^q$ is continuous, so the sequence $(j^{-1}Tf_n)_{n \in \bbN}$ is an element of $\ell^\infty(L^q)$. We thus obtain
	\begin{align*}
		T^\calU f^\calU = \big((Tf_n)_{n \in \bbN}\big)^\calU = \big((jj^{-1}Tf_n)_{n \in \bbN}\big)^\calU = j^\calU \, \big((j^{-1}Tf_n)_{n \in \bbN}\big)^\calU,
	\end{align*}
	so $T^\calU f^\calU$ is in the range of $j^\calU$. Therefore, we can apply Lemma~\ref{lem:fixed-space-of-positive-operators} and conclude that the fixed space of $T^\calU$ is finite dimensional.
	
	Now we employ what we proved in Subsection~\ref{subsection:eigenspaces-on-ultra-powers}: Corollary~\ref{cor:eigenspace-on-ultra-power} tells us that the number $1$ is a pole of the resolvent $\Res(\argument,T)$ and its algebraic multiplicity as an eigenvalue of $T$ is finite. Moreover, we note that the order of the pole equals one since the operator $T$ is power-bounded.
	
	Now it is easy to conclude the proof by combining a few known results from Perron--Frobenius theory. First we employ a theorem which goes originally back to Niiro and Sawashima \cite[Theorem~9.2]{Niiro1966} and which can, in the version that we use here, be found in \cite[Theorem~V.5.5]{Schaefer1974}. The theorem says that, as the spectral radius of our operator $T$ is a pole of the resolvent and as the corresponding spectral projection has finite-dimensional range, every spectral value of $T$ of maximal modulus is a pole of the resolvent. Hence, $T$ has only finitely many spectral values on the unit circle, and each such spectral value is a pole of the resolvent. Moreover, it readily follows from the Neumann series representation of the resolvent that the order of each such pole is dominated by the order of the pole $1$; hence, all unimodular spectral values of $T$ are first order poles of $\Res(\argument,T)$, so the algebraic and the geometric multiplicities of all those eigenvalues coincide.
	
	Thus, it only remains to show that the eigenspace of each unimodular eigenvalue of $T$ is finite dimensional. This follows, for instance, from \cite[Theorem~2]{Lin1978} or from \cite[Theorem~5.3]{Glueck2016} (the assumptions of this theorem are satisfied since the Banach lattice $L^p$ has order continuous norm and the operator $T$ is mean ergodic).
\end{proof}

\begin{remark}
	The above proofs of Lemma~\ref{lem:fixed-space-of-positive-operators} and Theorem~\ref{thm:main-result-positive} show that one does not actually need $T$ to be power-bounded. It suffices that $T$ has spectral radius at most $1$ and that $T$ is mean ergodic -- which, on reflexive Banach spaces, is slightly weaker than being power-bounded.
\end{remark}

To illustrate the generality of Theorem~\ref{thm:main-result-positive} we prove the following corollary for $L^p$-spaces over subsets of $\bbR^d$ with possibly infinite measure.

\begin{corollary} \label{cor:main-result-infinite-measure-space}
	Let $\Omega$ be a Borel measurable subset of $\bbR^d$, endowed with the $d$-dimensional Lebesgue measure $\lambda$, and let $1 \le p < \infty$. Let $T$ be a positive and power-bounded linear operator on $L^p(\Omega,\lambda)$ and assume that there exist numbers $\varepsilon_1,\varepsilon_2 > 0$ such that
	\begin{align*}
		& \int_{\Omega} \modulus{Tf(x)}^{p} (1 + \modulus{x})^{\varepsilon_2} \dx \lambda(x) < \infty \\
		\text{and} \qquad
		& \int_{\Omega} \modulus{Tf(x)}^{p + \varepsilon_1} (1 + \modulus{x})^{\varepsilon_2} \dx \lambda(x) < \infty
	\end{align*}
	for each $f \in L^p(\Omega,\lambda)$. Then $\sprEss(T) < 1$.
\end{corollary}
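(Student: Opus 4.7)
The plan is to apply Theorem~\ref{thm:main-result-positive} by constructing an $L^q$-space together with a lattice homomorphism into $L^p(\Omega,\lambda)$ whose range contains $T(L^p)$. Write $w(x) := (1+\modulus{x})^{\varepsilon_2}$. The two hypotheses, combined with the closed graph theorem, say precisely that $T$ is bounded from $L^p(\Omega,\lambda)$ into both $L^p(\Omega, w\dx\lambda)$ and $L^{p+\varepsilon_1}(\Omega, w\dx\lambda)$. A H\"older interpolation on the single measure $w\dx\lambda$ (writing $\modulus{Tf}^{p+\delta} w = (\modulus{Tf}^p w)^{(\varepsilon_1-\delta)/\varepsilon_1} (\modulus{Tf}^{p+\varepsilon_1} w)^{\delta/\varepsilon_1}$) then yields the boundedness of $T$ from $L^p(\Omega,\lambda)$ into $L^{p+\delta}(\Omega, w\dx\lambda)$ for every $\delta \in [0,\varepsilon_1]$.

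Next, I would fix some $\delta \in (0,\min(\varepsilon_1,\,p\varepsilon_2/d))$, which is possible since $\varepsilon_1,\varepsilon_2 > 0$, and choose $\beta$ in the open interval $\bigl(\tfrac{d\delta}{p(p+\delta)},\,\tfrac{\varepsilon_2}{p+\delta}\bigr)$; this interval is nonempty precisely because $\delta < p\varepsilon_2/d$. With $\psi(x) := (1+\modulus{x})^{-\beta}$ and $q := p+\delta$, define $j \colon L^q(\Omega,\lambda) \to L^p(\Omega,\lambda)$ by $j(g) := \psi g$. This $j$ is manifestly a lattice homomorphism, and H\"older's inequality with the conjugate exponents $(p+\delta)/p$ and $(p+\delta)/\delta$ shows it is bounded, using $\int \psi^{p(p+\delta)/\delta}\dx\lambda < \infty$ (which follows from the lower bound $\beta p(p+\delta)/\delta > d$).

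To verify $T(L^p) \subseteq j(L^q)$, for any $f \in L^p(\Omega,\lambda)$ I would compute
\[
    \int \modulus{Tf/\psi}^{p+\delta}\dx\lambda \;=\; \int \modulus{Tf}^{p+\delta}(1+\modulus{x})^{\beta(p+\delta)}\dx\lambda \;\le\; \int \modulus{Tf}^{p+\delta}\,w\dx\lambda \;<\; \infty,
\]
where the inequality uses $\beta(p+\delta) \le \varepsilon_2$ and the finiteness is the interpolation statement from the first paragraph. Hence $Tf/\psi \in L^q(\Omega,\lambda)$ and $Tf = j(Tf/\psi) \in j(L^q)$. An application of Theorem~\ref{thm:main-result-positive} then delivers $\sprEss(T) < 1$.

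The main technical point is reconciling the two constraints on $\beta$: the boundedness of $j$ demands $\beta$ sufficiently large (so that $\psi^{p(p+\delta)/\delta}$ is integrable against Lebesgue measure on $\bbR^d$), while the inclusion $T(L^p) \subseteq j(L^q)$ demands $\beta$ sufficiently small (so that the weight $(1+\modulus{x})^{\beta(p+\delta)}$ stays dominated by $w$). Both hypotheses of the corollary play an essential role in this balance: the first one supplies the $L^p(w\dx\lambda)$-endpoint needed to interpolate with the \emph{full} weight $w$, whereas the second hypothesis alone (combined with $Tf \in L^p(\lambda)$) would yield only $L^{p+\delta}(w^{\delta/\varepsilon_1}\dx\lambda)$, degrading the H\"older balance above to the additional restriction $\varepsilon_2 > d\varepsilon_1/p$ that the corollary is meant to avoid.
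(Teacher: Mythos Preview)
Your proof is correct and follows essentially the same strategy as the paper's: interpolate between the two weighted hypotheses to obtain $Tf \in L^{p+\delta}$ with a weight, then construct a lattice homomorphism $j$ from an $L^q$-space into $L^p(\Omega,\lambda)$ whose range absorbs $T(L^p)$, balancing the Hölder constraint for boundedness of $j$ against the weight constraint for the inclusion, and finally invoke Theorem~\ref{thm:main-result-positive}. The only cosmetic difference is that the paper realises $j$ as the identity inclusion of a \emph{weighted} space $L^q(\Omega,\delta^{\alpha r}\dx\lambda)$ into $L^p(\Omega,\lambda)$, whereas you realise it as the multiplication operator $g \mapsto \psi g$ on the \emph{unweighted} $L^q(\Omega,\lambda)$; these are isometrically equivalent pictures (your $\beta$ corresponds to the paper's $\alpha/p$, and the two numerical constraints match exactly). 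Your closing paragraph explaining why the first hypothesis is genuinely needed---and quantifying the loss to $\varepsilon_2 > d\varepsilon_1/p$ without it---is a nice addition not present in the paper.
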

\begin{proof}
	Let us define $\delta: \Omega \to \bbR$ by $\delta(x) = 1 + \modulus{x}$ for all $x \in \Omega$. Choose $q \in (p,\infty)$ sufficiently close to $p$ such that $q \le p + \varepsilon_1$ and $d(\frac{q}{p} - 1) < \frac{\varepsilon_2}{2}$. From $p < q \le p + \varepsilon_1$ it follows by interpolation that
	\begin{align*}
		\int_{\Omega} \modulus{Tf}^{q} \, \delta^{\varepsilon_2} \dx \lambda < \infty
	\end{align*}
	for each $f \in L^p(\Omega,\lambda)$. Now we choose a real number $\alpha$ which is strictly larger than $d(1 - \frac{p}{q})$ but strictly smaller than $d(1 - \frac{p}{q}) + \frac{\varepsilon_2}{2}\frac{p}{q}$. Then we have
	\begin{align*}
		d(1 - \frac{p}{q}) < \alpha \qquad \text{and} \qquad \frac{q}{p} \alpha < \varepsilon_2,
	\end{align*}
	hence, $\int_{\Omega} \modulus{Tf}^{q} \, \delta^{\frac{q}{p} \alpha} \dx \lambda < \infty$ for each $f \in L^p(\Omega,\lambda)$.
	
	Set $r := q/p \in (1,\infty)$ and choose $r' \in (1,\infty)$ such that $1/r + 1/r' = 1$ (i.e.\ $r' = \frac{q}{q-p}$); moreover, let $\mu$ denote the measure on the Borel $\sigma$-algebra over $\Omega$ which has density $\delta^{\alpha r}$ with respect to the Lebesgue measure, i.e.\ $\dx\mu = \delta^{\alpha r} \dx\lambda$. 
	
	For every function $g \in L^q(\Omega,\mu)$ it follows from H\"{o}lder's inequality that
	\begin{align*}
		\int_\Omega \modulus{g}^p \dx \lambda = \int_\Omega \modulus{g}^p \, \delta^\alpha \, \frac{1}{\delta^\alpha} \dx\lambda \le \left(\int_\Omega \modulus{g}^{pr} \delta^{\alpha r} \dx\lambda\right)^{1/r} \; \left(\int_\Omega \delta^{-\alpha r'}\dx\lambda\right)^{1/r'}, 
	\end{align*}
	and hence
	\begin{align*}
		\norm{g}_{L^p(\Omega,\lambda)} \le \norm{g}_{L^q(\Omega,\mu)} \; \left( \int_\Omega \delta^{-\alpha r'}\dx\lambda \right)^{\frac{1}{r'p}}
	\end{align*}
	We chose $\alpha$ to be strictly larger than $d\frac{q-p}{q}$, hence we have $\alpha r' > d$. This implies that $\int_{\Omega} \delta^{-\alpha r'} \dx \lambda < \infty$, so we have checked that $L^q(\Omega,\mu)$ continuously embeds into $L^p(\Omega,\lambda)$ (and obviously, the embedding is a lattice homomorphism). Moreover, we have already noted above that $\int_\Omega \modulus{Tf}^q \, \delta^{\alpha r}\dx\lambda < \infty$ for all $f \in L^p(\Omega,\lambda)$, so the range of $T$ is contained in $L^q(\Omega,\mu)$. Thus, the assertion follows from Theorem~\ref{thm:main-result-positive}.
\end{proof}

Assume for a moment that the Borel measurable set $\Omega \subseteq \bbR^d$ is bounded; then Corollary~\ref{cor:main-result-infinite-measure-space} becomes a special case of Theorem~\ref{thm:main-result-introduction}. Conversely, it is not difficult to deduce Theorem~\ref{thm:main-result-introduction} from Corollary~\ref{cor:main-result-infinite-measure-space} in case that the space $\Omega$ in the theorem is a bounded and Borel measurable subset of $\bbR^d$ and the measure $\mu$ in the theorem is the Lebesgue measure.

\subsection{Contractive operators}

Wang showed in \cite[Theorem~1.4]{Wang2004} that the positivity assumption in Theorem~\ref{thm:main-result-introduction} (and thus in Theorem~\ref{thm:main-result-positive}) cannot be dropped. However, our next result proves that we can replace it with the assumption that $T$, as well as the operator induced by $T$ on $L^q$, is contractive.

\begin{theorem} \label{thm:main-result-contractive}
	Let $p,q \in (1,\infty)$ be two distinct numbers and let $(\Omega_1,\mu_1)$ and $(\Omega_2,\mu_2)$ be two arbitrary measure spaces. Moreover, let $j: L^q := L^q(\Omega_2,\mu_2) \to L^p := L^p(\Omega_1,\mu_1)$ be an injective lattice homomorphism.
	
	Suppose that $T$ is linear operator on $L^p$ of norm at most $1$, that $TL^p \subseteq j(L^q)$ and that the induced operator $j^{-1}Tj: L^q \to L^q$ also has norm at most $1$. Then $\sprEss(T) < 1$.
\end{theorem}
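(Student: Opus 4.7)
My plan is to follow the template of Theorem~\ref{thm:main-result-positive} and reduce, via the ultrapower machinery of Corollary~\ref{cor:eigenspace-on-ultra-power}, to showing that $\fix T$ is finite-dimensional. The sublattice argument used in the positive case is no longer available without positivity of $T$ and must be replaced by a purely Banach-space argument.

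First I would carry out the reduction. Fix a free ultrafilter $\calU$ on $\bbN$. The ultrapowers $(L^p)^\calU$ and $(L^q)^\calU$ are again $L^p$- and $L^q$-spaces, $T^\calU$ is a contraction on $(L^p)^\calU$, and the closed graph theorem applied to $j^{-1}T$ yields, exactly as in the proof of Theorem~\ref{thm:main-result-positive}, both $T^\calU\bigl((L^p)^\calU\bigr) \subseteq j^\calU\bigl((L^q)^\calU\bigr)$ and that $(j^\calU)^{-1}T^\calU j^\calU = (j^{-1}Tj)^\calU$ is again a contraction on $(L^q)^\calU$. For each unimodular $\lambda$ the rotated operator $\bar\lambda T$ still satisfies the hypotheses, and $\fix(\bar\lambda T) = \ker(\lambda - T)$; since $\|T\| \le 1$, every unimodular spectral value of $T$ lies on the boundary of $\spec(T)$. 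So Corollary~\ref{cor:eigenspace-on-ultra-power} reduces the theorem to the claim that $\fix T$ is finite-dimensional whenever $T$ and $j$ satisfy the hypotheses.

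To prove this claim, the key new ingredient I would invoke is the classical theorem of Ando and Douglas: the range of any contractive projection on an $L^p$-space is isometrically isomorphic to an $L^p$-space over some measure space. Since $T$ is power-bounded on the reflexive space $L^p$ it is mean ergodic, and its Ces\`aro averages converge strongly to a contractive projection $P$ onto $\fix T$; the Ando--Douglas theorem then identifies $\fix T$ with some $L^p$-space, and the analogous argument applied to $\tilde T := j^{-1}Tj$ on $L^q$ identifies $\fix \tilde T$ with some $L^q$-space. The restriction $j|_{\fix \tilde T} \colon \fix \tilde T \to \fix T$ is bijective (as $\fix T \subseteq TL^p \subseteq j(L^q)$, and $j^{-1}f$ is a fixed point of $\tilde T$ whenever $f \in \fix T$) and a Banach-space isomorphism (continuity of the inverse follows from $\|j^{-1}f\|_{L^q} = \|j^{-1}Tf\|_{L^q} \le \|j^{-1}T\|\,\|f\|_{L^p}$ on $\fix T$, using that $j^{-1}T\colon L^p \to L^q$ is bounded by the closed graph theorem). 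We thus obtain a Banach-space isomorphism between an $L^p$-space and an $L^q$-space; for $p \ne q$ a classical non-isomorphism result (a Banach-space analogue of Proposition~\ref{prop:L_p-and-L_q-are-non-lattice-isomorphic}, to be collected in the appendix) then forces both to be finite-dimensional.

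The main obstacle will be packaging the two non-trivial Banach-space-theoretic ingredients --- the Ando--Douglas characterisation of ranges of contractive $L^p$-projections, and the Banach non-isomorphism of $L^p$ and $L^q$ for distinct $p,q$ --- in a form that applies to arbitrary, possibly non-$\sigma$-finite measure spaces, including those arising from the ultrapowers $(L^p)^\calU$ and $(L^q)^\calU$ on which the reduction step operates.
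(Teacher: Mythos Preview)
Your overall strategy matches the paper's: use mean ergodicity and the Ando--Tzafriri theorem on contractive $L^p$-projections to identify $\fix T$ and $\fix(j^{-1}Tj)$ with an $L^p$- and an $L^q$-space respectively, conclude finite-dimensionality from the Banach-space non-isomorphism of $L^p$ and $L^q$ (Theorem~\ref{thm:L_p-and-L_q-are-not-banach-space-isomorphic}), lift to the ultrapower, and finish via Corollary~\ref{cor:eigenspace-on-ultra-power} after rotating by $\bar\lambda$.

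There is, however, one genuine gap in your reduction step. You write $(j^\calU)^{-1}T^\calU j^\calU = (j^{-1}Tj)^\calU$ and then propose to apply your ``claim'' to the pair $(T^\calU, j^\calU)$. But the claim requires the lattice homomorphism to be \emph{injective}, and $j^\calU$ need not be: injectivity of $j^\calU$ is equivalent to $j$ being bounded below, which is not assumed and typically fails (e.g.\ for the inclusion $L^q(0,1)\hookrightarrow L^p(0,1)$ with $q>p$ take $f_n = n^{1/q}\one_{[0,1/n]}$). So as written, neither the expression $(j^\calU)^{-1}$ nor the hypothesis ``$j$ injective'' in your claim is available at the ultrapower level.

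The fix is exactly what the paper does: factor out $\ker(j^\calU)$ to obtain an injective lattice homomorphism $J\colon (L^q)^\calU/\ker(j^\calU)\to (L^p)^\calU$ with the same range as $j^\calU$. Here it is essential that $j$ is a lattice homomorphism, so that $\ker(j^\calU)$ is a closed ideal and the quotient is again an $L^q$-space. One then has to check that $J^{-1}T^\calU J$ is contractive; this is not entirely automatic, but follows because the contraction $(j^{-1}Tj)^\calU = (j^{-1}T)^\calU j^\calU$ vanishes on $\ker(j^\calU)$ and hence descends to a contraction on the quotient, which one verifies to coincide with $J^{-1}T^\calU J$. Once this is in place, your argument goes through.
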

\begin{proof}
	The main ideas are similar to those used in the proof of Theorem~\ref{thm:main-result-positive}, so we are a bit briefer here. First assume that the number $1$ is a spectral value of $T$. We show that $1$ is a pole of the resolvent $\Res(\argument,T)$ and that its algebraic multiplicity as an eigenvalue of $T$ is finite.
	
	To this end, let us first show that $\fix T$ is finite dimensional. Indeed, the mapping $j|_{\fix (j^{-1}Tj)}$ is a continuous linear bijection between $\fix T$ and $\fix(j^{-1}Tj)$. We note that $T$ is mean ergodic since it is power-bounded and since $L^p$ is reflexive. The associated mean ergodic projection $P: L^p \to L^p$ has $\fix T$ as its range and is contractive since $T$ is so. Thus, as shown by Tzafriri in \cite[Theorem~6]{Tzafriri1969} (based on earlier work of Ando \cite[Theorem~4]{Ando1966}), the range $\fix T$ of $P$ is isometrically isomorphic to an $L^p$-space over some measure space. By the same reasoning we obtain that $\fix(j^{-1}Tj)$ is isometrically isomorphic to an $L^q$-space over some measure space. But $j$ is a Banach space isomorphism between those spaces; according to Theorem~\ref{thm:L_p-and-L_q-are-not-banach-space-isomorphic} in the appendix this can only be true if $\fix T$ is finite dimensional.
	
	Now we proceed as in the proof of Theorem~\ref{thm:main-result-positive} and lift the situation to ultrapowers $(L^p)^\calU$ and $(L^q)^\calU$. Since $j^\calU$ might not be injective in general, we factor out its kernel and obtain an injective lattice homomorphism $J: (L^q)^\calU / \ker(j^\calU) \to (L^p)^\calU$ with the same range as $j^\calU$ (although positivity does not play any further role in this proof, here we need that $j$, and thus $j^\calU$, is a lattice homomorphism to ensure that the quotient space $(L^q)^\calU/\ker(j^\calU)$ is still an $L^q$-space). The operator $T^\calU: (L^p)^\calU \to (L^p)^\calU$ satisfies the same assumptions as $T$. The only property for which this is not obvious is that $J^{-1}T^\calU J$ is contractive on $(L^q)^\calU/\ker(j^\calU)$, but this can be seen as follows: the contractive operator $(j^{-1}Tj)^\calU = (j^{-1}T)^\calU j^\calU$ leaves $\ker(j^\calU)$ invariant (in fact, it even vanishes on $\ker(j^\calU)$), so it induces an operator from $(L^q)^\calU/\ker(j^\calU)$ to $(L^q)^\calU/\ker(j^\calU)$ (which is again contractive), and it is easily checked that this induced operator coincides with $J^{-1}T^\calU J$.
	
	 We can now apply our results from the first part of the proof in order to conclude that $\fix(T^\calU)$ is finite dimensional. Thus, Corollary~\ref{cor:eigenspace-on-ultra-power} shows that $1$ is a pole of the resolvent $\Res(\argument,T)$ and an eigenvalue of $T$ of finite algebraic multiplicity.
	
	Finally, let $\lambda \in \bbC$ be any complex number of modulus $1$. If $\lambda$ is a spectral value of $T$ we can apply what we have just shown to the operator $\overline{\lambda}T$ and thus conclude that $\lambda$ is a pole of the resolvent $\Res(\argument,T)$ and an eigenvalue of $T$ of finite algebraic multiplicity. Hence, $\sprEss(T) < 1$.
\end{proof}

A nice consequence of Theorem~\ref{thm:main-result-contractive} is that the following corollary is true without any additional assumptions on the operator $T$. Let $(\Omega,\mu)$ be a probability space and let $p \in (1,\infty)$. A bounded linear operator $T$ on $L^p(\Omega,\mu)$ is called \emph{hypercontractive} if its range is contained in $L^q(\Omega,\mu)$ for some $q \in (p,\infty)$ and if the norm of $T$ as an operator from $L^p(\Omega,\mu)$ to $L^q(\Omega,\mu)$ does not exceed $1$.

\begin{corollary} \label{cor:hypercontractive-operators}
	Let $(\Omega,\mu)$ be a probability space and let $p \in (1,\infty)$. If $T$ is a hypercontractive linear operator on $L^p(\Omega,\mu)$, then $\sprEss(T) < 1$.
\end{corollary}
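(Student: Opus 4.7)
The plan is to deduce the corollary directly from Theorem~\ref{thm:main-result-contractive} by taking $j$ to be the canonical inclusion $L^q(\Omega,\mu) \hookrightarrow L^p(\Omega,\mu)$. Since $(\Omega,\mu)$ is a probability space, H\"older's inequality gives $\norm{f}_{L^p} \le \norm{f}_{L^q}$ for every $f \in L^q$, so this inclusion is a well-defined, injective, contractive lattice homomorphism. This identifies the embedded copy of $L^q$ inside $L^p$, which is the only nontrivial choice needed.

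Next I would verify, one by one, the three structural hypotheses of Theorem~\ref{thm:main-result-contractive}. First, $T\colon L^p\to L^p$ is contractive: factoring $T$ through $L^q$, we have $\norm{Tf}_{L^p}\le\norm{Tf}_{L^q}\le\norm{f}_{L^p}$ by hypercontractivity and H\"older. Second, the inclusion $TL^p\subseteq j(L^q)$ is exactly the hypothesis $T(L^p)\subseteq L^q$. Third, the induced operator $j^{-1}Tj\colon L^q\to L^q$ is again contractive: for $f\in L^q$, viewing $j(f)=f$ inside $L^p$, one has $j^{-1}Tj(f)=Tf\in L^q$ with $\norm{Tf}_{L^q}\le\norm{f}_{L^p}\le\norm{f}_{L^q}$.

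With these verifications in place, Theorem~\ref{thm:main-result-contractive} applied with the distinct exponents $p,q\in(1,\infty)$ yields $\sprEss(T)<1$. There is no real obstacle here beyond correctly identifying $j$ as the canonical embedding and chaining H\"older's inequality with the hypercontractivity estimate in both directions; the corollary is essentially a direct translation of the hypercontractivity assumption into the two-norm contractivity setup of Theorem~\ref{thm:main-result-contractive}.
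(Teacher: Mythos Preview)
Your proof is correct and follows essentially the same approach as the paper: both take $j$ to be the canonical inclusion $L^q\hookrightarrow L^p$, use H\"older's inequality together with hypercontractivity to verify that $T$ is contractive on $L^p$ and that $j^{-1}Tj$ is contractive on $L^q$, and then invoke Theorem~\ref{thm:main-result-contractive}. Your write-up simply spells out the two contractivity estimates in slightly more detail than the paper does.
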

\begin{proof}
	Let $q \in (p,\infty)$ be as in the definition of hypercontractivity. Since $(\Omega,\mu)$ is a probability space, the embedding of $L^q := L^q(\Omega,\mu)$ into $L^p := L^p(\Omega,\mu)$ is contractive. Hence, both operators $T:L^p \to L^p$ and $T|_{L^q}: L^q \to L^q$ are contractive, so the assertion follows from Theorem~\ref{thm:main-result-contractive}.
\end{proof}

We remark that the assumptions of Theorem~\ref{thm:main-result-contractive} can be slightly relaxed if $p = 2$ or $q = 2$.

\begin{remark} \label{rem:main-result-hilbert}
	Suppose that, in the situation of Theorem~\ref{thm:main-result-contractive}, the assumption that $T$ and $j^{-1}Tj$ both be contractive is replaced with one of the following two assumptions:
	\begin{enumerate}[\upshape (a)]
		\item We have $p = 2$, the operator $T$ on $L^p = L^2$ has spectral radius $1$ and the induced operator $j^{-1}Tj$ is contractive on $L^q$.
		\item The operator $T$ is contractive on $L^p$ and we have $q = 2$.
	\end{enumerate}
	Then the conclusion $\sprEss(T) < 1$ remains true.
\end{remark}
\begin{proof}
	Note that the contractivity of $T$ and $j^{-1}Tj$ was only used in the proof of Theorem~\ref{thm:main-result-contractive} to show that the fixed spaces of those operators (which are the ranges of their mean ergodic projections) are isomorphic to an $L^p$- and an $L^q$-space, respectively. Now, if one of those operators is not contractive but defined on a Hilbert space, then its fixed space is obviously also a Hilbert space and thus isomorphic to an $L^2$-space over some measure space. The rest of the proof is the same as for Theorem~\ref{thm:main-result-contractive} (note that we need to explicitly impose that assumption $r(T) = 1$ in~(a) since this is not automatic there).
\end{proof}

\section{Uniform estimates} \label{section:uniform-estimates}

As observed in \cite[Proposition~11]{Miclo2015} the essential spectral radius $\sprEss(T)$ in Theorem~\ref{thm:main-result-introduction} can be arbitrarily close to $1$. However, things are different if the norm of $T$ as an operator from $L^p$ to $L^q$ is bounded by a fixed constant. This is the content of our next theorem which complements \cite[Theorem~14]{Miclo2015}; more precisely, \cite[Theorem~14]{Miclo2015} deals exclusively with self-adjoint Markov operators, but it yields explicit bounds for the largest non-unimodular eigenvalues of those operators. By contrast, the subsequent theorem does not assume self-adjointness and replaces $L^2$ with $L^p$, but only yields the \emph{existence} of a bound for $\sprEss(T)$, and it does not give any information about the precise magnitude of eigenvalues which are larger than $\sprEss(T)$.

\begin{theorem} \label{thm:uniform-lower-bound-for-spectral-gap-positive}
	Fix three real numbers $k_1,k_2,k_3 \ge 0$. Assume that, in the situation of Theorem~\ref{thm:main-result-positive}, $j$ is injective, and that we have $\norm{j}_{L^q \to L^p} \le k_1$ and $\norm{j^{-1}T}_{L^p \to L^q} \le k_2$ as well as $\sup_{k \in \bbN_0} \norm{T^k} \le k_3$.
	
	Then it follows that $\sprEss(T) \le c$, where $c = c(p,q, k_1, k_2, k_3) < 1$ is a constant which depends only on $p$, $q$, $k_1$, $k_2$ and $k_3$.
\end{theorem}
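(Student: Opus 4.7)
The plan is to argue by contradiction via an ultraproduct construction that extends the ultrapower machinery of Subsection~\ref{subsection:ultra-powers} to sequences of measure spaces. Suppose no such uniform constant exists; then for each $n \in \bbN$ we can find measure spaces $(\Omega_1^{(n)}, \mu_1^{(n)})$ and $(\Omega_2^{(n)}, \mu_2^{(n)})$, an injective lattice homomorphism $j_n \colon L^q_n \to L^p_n$ (where $L^p_n := L^p(\Omega_1^{(n)}, \mu_1^{(n)})$ and $L^q_n := L^q(\Omega_2^{(n)}, \mu_2^{(n)})$), and a positive power-bounded operator $T_n$ on $L^p_n$ satisfying the three uniform bounds, but with $\sprEss(T_n) \to 1$.

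Fix a free ultrafilter $\calU$ on $\bbN$ and form the ultraproducts $\widetilde{L}^p := (L^p_n)_\calU$ and $\widetilde{L}^q := (L^q_n)_\calU$, constructed analogously to the ultrapowers in Subsection~\ref{subsection:ultra-powers} but with varying base spaces; these are again (isometrically lattice isomorphic to) a concrete $L^p$- and a concrete $L^q$-space, respectively. The sequence $(T_n)$ induces a positive operator $\widetilde{T}$ on $\widetilde{L}^p$ with power bound at most $k_3$, and $(j_n)$ induces a lattice homomorphism $\widetilde{j} \colon \widetilde{L}^q \to \widetilde{L}^p$ of norm at most $k_1$. The uniform bound $\norm{j_n^{-1}T_n} \le k_2$ shows that $\widetilde{T}\widetilde{L}^p \subseteq \widetilde{j}\widetilde{L}^q$: for $(f_n)^\calU \in \widetilde{L}^p$ the sequence $(j_n^{-1}T_n f_n)_n$ lies in $\ell^\infty$ of the $L^q_n$, and its ultraproduct class is mapped to $\widetilde{T}(f_n)^\calU$ by $\widetilde{j}$. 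Applying Theorem~\ref{thm:main-result-positive} to $\widetilde{T}$ and $\widetilde{j}$ therefore yields $\sprEss(\widetilde{T}) < 1$.

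To derive a contradiction I would transfer a singular sequence from the $T_n$ to $\widetilde{T}$. Since $\sprEss(T_n) \to 1$, one can choose, for each $n$, a point $\lambda_n$ on the topological boundary of $\specEss(T_n)$ with $|\lambda_n| \to 1$; set $\lambda := \lim_{n \to \calU} \lambda_n$, so $|\lambda| = 1$. Because $\lambda_n - T_n$ is not Fredholm and $\lambda_n$ lies on the boundary of $\specEss(T_n)$, a standard Weyl-type argument produces unit vectors $f_n^{(1)}, f_n^{(2)}, \ldots \in L^p_n$ that are pairwise at distance at least $1/2$ and satisfy $\norm{(\lambda_n - T_n)f_n^{(k)}} \to 0$ as $k \to \infty$. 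After diagonalizing so that $\norm{(\lambda_n - T_n)f_n^{(k)}} \le 1/n$ whenever $n \ge k$ and padding with zero for $n < k$, the classes $\widetilde{F}^{(k)} := (f_n^{(k)})_n^\calU$ are unit vectors in $\ker(\lambda - \widetilde{T})$ that are pairwise $1/2$-separated. Hence $\ker(\lambda - \widetilde{T})$ is infinite-dimensional, so $\lambda - \widetilde{T}$ fails to be Fredholm and $\lambda \in \specEss(\widetilde{T})$; this contradicts $|\lambda| = 1 > \sprEss(\widetilde{T})$.

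The principal obstacle is the Weyl-sequence step. When $\lambda_n - T_n$ has infinite-dimensional kernel or non-closed range, the required separated sequence is obtained by a Riesz-type argument essentially dual to the proof of Theorem~\ref{thm:kernel-on-ultra-power}. The harder case is when $\lambda_n - T_n$ is upper semi-Fredholm with infinite-dimensional cokernel; this is precisely where choosing $\lambda_n$ on the topological boundary of $\specEss(T_n)$ pays off, because the identification of the essential spectrum with the spectrum in the Calkin algebra $\calL(L^p_n)/\calK(L^p_n)$ forces every boundary point of $\specEss(T_n)$ to be an approximate eigenvalue of $T_n + \calK(L^p_n)$ in the Calkin algebra, which is exactly what yields a separated singular sequence in $L^p_n$.
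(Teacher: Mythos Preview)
Your argument is correct and reaches the same contradiction as the paper, but the paper avoids the Weyl-sequence step by a two-stage ultra-construction. First each $L^p_n$ is replaced by its ultrapower $\tilde E_n := (L^p_n)^\calU$; since $\lambda_n$ (chosen with $|\lambda_n| = \sprEss(T_n)$) lies in $\specEss(T_n)$ and on the boundary of $\spec(T_n)$, the contrapositive of Corollary~\ref{cor:eigenspace-on-ultra-power} forces $\dim\ker(\lambda_n - (T_n)^\calU) = \infty$, and Riesz's lemma then immediately supplies $1/2$-separated \emph{exact} eigenvectors of $(T_n)^\calU$. Only afterwards does the paper form the ultraproduct of the $\tilde E_n$ and obtain an infinite-dimensional eigenspace for the limit operator, contradicting Theorem~\ref{thm:main-result-positive}. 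Thus the paper recycles Corollary~\ref{cor:eigenspace-on-ultra-power} instead of invoking any further Fredholm theory; your single-ultraproduct route is conceptually more economical but needs the additional (standard) fact that boundary points of $\specEss(T_n)$ are never upper semi-Fredholm for $T_n$.

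One point worth tightening in your write-up: the construction ``dual to the proof of Theorem~\ref{thm:kernel-on-ultra-power}'' only produces an $\varepsilon_n$-separated sequence with $\varepsilon_n$ depending on $n$, which is not enough for the ultraproduct vectors $\widetilde F^{(k)}$ to be separated. To obtain the uniform $1/2$ you assert, use instead that $\lambda_n - T_n$ failing to be upper semi-Fredholm yields, for every $\delta>0$, an infinite-dimensional subspace of $L^p_n$ on which $\lambda_n - T_n$ has norm at most $\delta$, and apply Riesz's lemma inside that subspace.
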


Note that the constant $c$ does not explicitly depend on the measure spaces $(\Omega_1,\mu_1)$ and $(\Omega_2,\mu_2)$; however, in the standard situation where $(\Omega_1,\mu_1) = (\Omega_2,\mu_2)$ is a finite measure space, where $q > p$ and where $j$ is the canonical embedding, the operator norm of $j$ does of course depend on the number $\mu_1(\Omega_1)$. Thus, some dependence of $c$ on the underlying measure spaces is implicitly given by the dependence of $c$ on the operator norm of $j$.

Recall that our proof of Theorem~\ref{thm:main-result-positive} relied (via the proof of Theorem~\ref{thm:kernel-on-ultra-power}) on the employment of an ultrapower of a given Banach space; we are now going to use the same technique again, but in addition we also need the slightly more general concept of an \emph{ultraproduct} of several different Banach spaces. To describe elements of ultraproducts we use a similar notation as introduced for the elements of ultrapowers in Subsection~\ref{subsection:ultra-powers}. For details about the theory of ultraproducts we refer to the survey article \cite{Heinrich1980}.

\begin{proof}[Proof of Theorem~\ref{thm:uniform-lower-bound-for-spectral-gap-positive}]
	Assume that the theorem fails. Then we can find (for $n \in \bbN$) measure spaces $(\Omega_1^{(n)},\mu_1^{(n)})$ and $(\Omega_2^{(n)}, \mu_2^{(n)})$, injective lattice homomorphisms $j_n: E_n := L^p(\Omega_1^{(n)}, \mu_1^{(n)}) \to F_n := L^q(\Omega_2^{(n)},\mu_2^{(n)})$ and power-bounded positive linear operators $T_n$ on $E_n$ such that $\sprEss(T_n) \uparrow 1$ and such that $\norm{j_n}_{F_n \to E_n} \le k_1$ and $\norm{j_n^{-1}T_n}_{E_n \to F_n} \le k_2$ as well as $\sup_{k \in \bbN_0} \norm{T_n^k} \le k_3$ for each $n$.
	
	Each $T_n$ has a number $\lambda_n$ of modulus $\modulus{\lambda_n} = \sprEss(T_n) < 1$ in its essential spectrum, and after choosing an appropriate subsequence we may assume that $(\lambda_n)$ converges to a complex number $\lambda$ of modulus $1$. 
	
	Now choose a free ultrafilter $\calU$ on $\bbN$. We define $\tilde E_n := (E_n)^\calU$, $\tilde T_n = (T_n)^\calU$ and $\tilde F_n := (F_n)^\calU / \ker(j_n)^\calU$ for each $n \in \bbN$ and we let $\tilde j_n: \tilde F_n \to \tilde E_n$ denote the injective lattice homomorphism induced by $(j_n)^\calU : (F_n)^\calU \to \tilde E_n$ for each $n \in \bbN$.
	
	For the operators $\tilde j_n$ and $\tilde T_n$ we also have $\norm{\tilde j_n}_{\tilde F_n \to \tilde E_n} \le k_1$ and $\norm{\tilde j_n^{-1}\tilde T_n}_{\tilde E_n \to \tilde F_n} \le k_2$ as well as $\sup_{k \in \bbN_0} \norm{\tilde T_n^k} \le k_3$ for each $n \in \bbN$; the second of those inequalities follows from the fact that, for each $n$, the operator $\tilde j_n^{-1}\tilde T_n: \tilde E_n \to \tilde F_n$ is the composition of the operator $(j_n^{-1}T_n)^\calU: \tilde E_n \to (F_n)^\calU$ with the quotient map $(F_n)^\calU \to \tilde F_n$.
	
	Since $\modulus{\lambda_n} = \sprEss(T_n)$ for each $n$ and since $T_n$ has at most countably many spectral values of modulus strictly larger than $\sprEss(T_n)$, it follows that $\lambda_n$ is contained in the boundary of $\spec(T_n)$. Thus, it follows from Corollary~\ref{cor:eigenspace-on-ultra-power} that, for each index $n$, the number $\lambda_n$ is an eigenvalue of $\tilde T_n$ with infinite dimensional eigenspace. Hence, for each $n \in \bbN$ we can find a sequence of eigenvectors $(f_n^{(k)})_{k \in \bbN} \subseteq \ker(\lambda_n - \tilde T_n)$ of norm $\norm{f_n^{(k)}} = 1$ such that $\norm{f_n^{(k)} - f_n^{(j)}} \ge 1/2$ whenever $j \not= k$.
	
	Now we employ an ultraproduct argument. Let $\tilde E$ and $\tilde F$ as well as $\tilde j: \tilde F \to \tilde E$ and $\tilde T: \tilde E \to \tilde E$ denote the ultraproducts of the spaces $\tilde E_n$ and $\tilde F_n$ and of the operators $\tilde j_n$ and $\tilde T_n$, respectively, along the free ultrafilter $\calU$ (it is not important here that $\calU$ is the same ultrafilter which we used in the first part of the proof). Note that the ultraproduct of the operators $\tilde j_n$ can only by constructed since the embeddings $\tilde j_n$ are norm bounded by $k_1$. The powers of the operator $\tilde T$ are norm bounded by $k_3$.
	
	The fact that $\norm{\tilde j_n^{-1} \tilde T_n}_{\tilde E_n \to \tilde F_n} \le k_2$ for all $n$ implies that the range of $\tilde T$ is contained in the range of $\tilde j$. As $\tilde E$ is an $L^p$-space and $\tilde F$ is an $L^q$-space, we can apply Theorem~\ref{thm:main-result-positive} to conclude that $\sprEss(\tilde T) < 1$.
	
	On the other hand, $\lambda$ is an eigenvalue of $\tilde T$ and for each $k \in \bbN$ the vector $f^{(k)} := \big((f_n^{(k)})_{n \in \bbN}\big)^\calU \in \tilde E$ is a corresponding eigenvector. For $j\not=k$ we have $\norm{f^{(k)} - f^{(j)}} \ge 1/2$, so $\ker(\lambda - \tilde T)$ is infinite dimensional, which contradicts $\sprEss(\tilde T) < 1$.
\end{proof}

We refrain from also discussing a uniform version of Theorem~\ref{thm:main-result-contractive}.

\section{Convergence of positive semigroups} \label{section:applications}

An important task in the study of linear dynamical systems is to give sufficients conditions for the convergence of \emph{operator semigroups}. Let $E$ be a Banach space. An \emph{operator semigroup} (more precisely, a \emph{one-parameter semigroup of linear operators}) on $E$ is a family of bounded linear operators $(T_t)_{t \in (0,\infty)} \subseteq \calL(E)$ such that $T_t T_s = T_{t+s}$ for all $s,t \in (0,\infty)$ (we do not require any continuity assumption with respect to the time parameter $t$ here). The semigroup $(T_t)_{t \in (0,\infty)}$ is called \emph{bounded} if $\sup_{t \in (0,\infty)} \norm{T_t} < \infty$. If $E$ is a Banach lattice and each $T_t$ is a positive operator, then $(T_t)_{t \in (0,\infty)}$ is called a \emph{positive operator semigroup}.

Typical results about the long term behaviour of operator semigroups impose, for instance, regularity and/or spectral assumptions on a semigroup and conclude that $T_t$ converges as $t \to \infty$ either with respect to the strong operator topology or even with respect to the operator norm, depending on the particular assumptions. For instance, it was proved by Lotz \cite[Theorem~4]{Lotz1986} that if $(T_t)_{t \in (0,\infty)}$ is a positive and bounded operator semigroup on a Banach lattice $E$ and if $\sprEss(T_{t_0}) < 1$ for at least one time $t_0$, then $T_t$ converges with respect to the operator norm as $t \to \infty$. If we combine this with Theorem~\ref{thm:main-result-introduction} we immediately obtain the following convergence result.

\begin{theorem} \label{thm:convergence-of-positive-semigroups}
	Let $1 < p < q < \infty$ and let $(\Omega,\mu)$ be a finite measure space. Let $(T_t)_{t \in (0,\infty)}$ be a positive and bounded operator semigroup on $L^p(\Omega,\mu)$. If there exists a time $t_0 \in (0,\infty)$ for which we have $T_{t_0}L^p(\Omega,\mu) \subseteq L^q(\Omega,\mu)$, then $T_t$ converges with respect to the operator norm as $t \to \infty$.
\end{theorem}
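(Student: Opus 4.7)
The plan is to combine Theorem~\ref{thm:main-result-introduction} with the convergence result of Lotz \cite[Theorem~4]{Lotz1986} that is quoted in the paragraph preceding the statement. Concretely, I would first show that the single operator $T_{t_0}$ satisfies all the hypotheses of Theorem~\ref{thm:main-result-introduction}, which then supplies the spectral gap, and then feed this spectral gap into Lotz's semigroup convergence theorem.

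Verifying the hypotheses of Theorem~\ref{thm:main-result-introduction} for $T_{t_0}$ is straightforward. Positivity of $T_{t_0}$ is inherited from the semigroup. The hyperboundedness $T_{t_0} L^p(\Omega,\mu) \subseteq L^q(\Omega,\mu)$ is precisely the assumption. Finally, power-boundedness of $T_{t_0}$ follows from the semigroup law, since
\begin{align*}
    \sup_{n \in \bbN_0} \norm{T_{t_0}^n} \;=\; \sup_{n \in \bbN_0} \norm{T_{n t_0}} \;\le\; \sup_{t \in (0,\infty)} \norm{T_t} \;<\; \infty
\end{align*}
by the assumed boundedness of $(T_t)_{t \in (0,\infty)}$. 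Hence Theorem~\ref{thm:main-result-introduction} applies and gives $\sprEss(T_{t_0}) < 1$.

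With the spectral gap in hand, I would then invoke \cite[Theorem~4]{Lotz1986}: for a positive, bounded operator semigroup on a Banach lattice, the existence of a single time $t_0 \in (0,\infty)$ with $\sprEss(T_{t_0}) < 1$ is enough to conclude that $T_t$ converges in the operator norm topology as $t \to \infty$. Applied in our $L^p$ setting this directly yields the claim. I do not foresee any serious obstacle; the proof is essentially a two-line combination, and the only thing to check is that each input hypothesis is indeed available, which is immediate. (Note that, were power-boundedness of $T_{t_0}$ not evident, one could equally well invoke Corollary~\ref{cor:main-result-without-power-boundedness}, for which spectral radius at most $1$ suffices.)
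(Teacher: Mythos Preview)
Your proposal is correct and matches the paper's own argument exactly: the paper also derives the theorem as an immediate combination of Theorem~\ref{thm:main-result-introduction} (applied to $T_{t_0}$) with Lotz's convergence theorem \cite[Theorem~4]{Lotz1986}. The only cosmetic quibble is that in your displayed chain the case $n=0$ gives $T_{t_0}^0 = \id$, which is not literally of the form $T_{nt_0}$ for the semigroup indexed by $(0,\infty)$; but this is harmless since $\norm{\id} = 1$.
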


Obviously we obtain an even more general result if we make use of Theorem~\ref{thm:main-result-positive} rather than Theorem~\ref{thm:main-result-introduction}.

Theorem~\ref{thm:convergence-of-positive-semigroups} complements several recent results about the long term behaviour of positive operator semigroups; see for instance the article \cite{Mischler2016} which has its focus on spectral theory and growth fragmentation equations, the paper \cite{GerlachLB} on so-called \emph{lower bound methods} for semigroups on $L^1$, the articles \cite{Pichor2016, Gerlach2017, Pichor2018, Pichor2018a, GerlachConvPOS} which all deal with semigroups that dominate integral operators, and the work \cite{Mokhtar-Kharroubi2016} which considers perturbed semigroups on $L^1$-spaces and which is related to the aforementioned series of articles. A good overview of many classical convergence results for positive semigroups can be found in \cite{Arendt1986, Emelyanov2007}; also see the recent monograph \cite{Batkai2017}. The reader who is also interested in the asymptotic theory of general operator semigroups (without positivity assumptions) is referred to \cite[Chapter~V]{Engel2000}, \cite[Chapter~5]{Arendt2011} and to the monograph \cite{Neerven1996}.

\section{Concluding remarks} \label{section:concluding-remarks}

\subsection{End points of the $L^p$-scale} \label{subsection:end-points-of-the-l_p-scale}

In most of our results we only considered the case $p,q \in (1,\infty)$. It is clear that, for instance, Theorem~\ref{thm:main-result-introduction} remains true if $q = \infty$ since $L^\infty(\Omega,\mu)$ embeds into $L^q(\Omega,\mu)$ for any $q \in (1,\infty)$ and thus, in particular, for any $q \in (p,\infty)$. However, much more is acutally true at the end of the $L^p$-scale and it is worthwhile to discuss this in some detail.

We need the following observations from Dunford--Pettis theory: let $T$ be a bounded linear operator between two Banach spaces $E$ and $F$. Recall that $T$ is called \emph{weakly compact} if it maps the closed unit ball in $E$ to a relatively weakly compact subset of $F$. Moreover, $T$ is said to be a \emph{Dunford--Pettis operator} or to be \emph{completely continuous} if, for every sequence $(x_n)$ in $E$ which converges weakly to a vector $x \in E$, the sequence $(Tx_n)$ in $F$ converges in norm to the vector $Tx$; equivalently, $T$ maps weakly compact subsets of $E$ to norm-compact (equivalently: relatively norm-compact) subsets of $F$. Every compact operator between $E$ and $F$ is a Dunford--Pettis operator, and the converse is true if $E$ is reflexive; thus, Dunford--Pettis operators are particularly interesting on non-reflexive Banach spaces. We point out that if $E,F,G$ are Banach spaces, $T: E \to F$ is weakly compact and $S: F \to G$ is a Dunford--Pettis operator, then $ST: E \to G$ is compact. We will make repeated use of this simple observation in the proof of the subsequent theorem.

Now, let $(\Omega,\mu)$ be an arbitrary measure space. Then the spaces $E = L^1(\Omega,\mu)$ and $E = L^\infty(\Omega,\mu)$ are so-called \emph{Dunford--Pettis spaces}, i.e.\ every weakly compact linear operator from $E$ to any Banach space $F$ is a Dunford--Pettis operator; see \cite[Proposition~3.7.9]{Meyer-Nieberg1991}.

Although the subsequent result follows from rather standard arguments from Dunford--Pettis theory we include the proof for the convenience of the reader.

\begin{theorem} \label{thm:end-points-of-the-L_p-scale}
	Let $p,q \in [1,\infty]$ and let $(\Omega_1,\mu_1)$ and $(\Omega_2,\mu_2)$ be two arbitrary measure spaces. Moreover, let $j: L^q := L^q(\Omega_2,\mu_2) \to L^p := L^p(\Omega_1,\mu_1)$ be an injective lattice homomorphism.
	
	Let $T$ be a bounded linear operator on $L^p$ and assume that $TL^p \subseteq j(L^q)$.
	\begin{enumerate}
		\item[(a)] If $q \in (1,\infty]$ and $p = 1$, then $T^2$ is compact.
		\item[(b)] If $q \in [1,\infty)$ and $p = \infty$, then $T^2$ is compact.
		\item[(c)] If $p \in (1,\infty)$ and $q \in \{1,\infty\}$, then $T$ is compact.
	\end{enumerate}
\end{theorem}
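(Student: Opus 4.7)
The plan is to factor $T = jS$, where $S := j^{-1}T \colon L^p \to L^q$ is bounded by the closed graph theorem (we use injectivity of $j$), and then to apply standard Dunford--Pettis theory case-by-case: exploit reflexivity of $L^r$ for $r \in (1,\infty)$, the Dunford--Pettis property of $L^1$ and $L^\infty$, and the fact that a weakly compact operator composed with a Dunford--Pettis operator is compact. In the two boundary subcases the order structure of $L^p$ and $L^q$ will enter as well.

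For case (c), with $L^p$ reflexive and $L^q \in \{L^1,L^\infty\}$ a Dunford--Pettis space, both factors of $T=jS$ are weakly compact: $S$ because $L^p$ is reflexive (so it is weak-weak continuous on the weakly compact unit ball $B_{L^p}$), and $j$ because $j(B_{L^q})$ is a bounded subset of the reflexive space $L^p$. The Dunford--Pettis property of $L^q$ then upgrades the weakly compact operator $j$ to a completely continuous one, so that $T = jS$ is the composition of a weakly compact map with a Dunford--Pettis map, and hence compact.

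For cases (a) and (b) the goal is to show $T$ itself is weakly compact; the Dunford--Pettis property of $L^p \in \{L^1,L^\infty\}$ then makes $T$ completely continuous, so that $T^2 = T \circ T$ is weakly compact followed by completely continuous, hence compact. If $q \in (1,\infty)$, reflexivity of $L^q$ immediately yields that $S$, and so $T = jS$, is weakly compact. For the boundary subcases I would use the order structure: in case (a) with $q = \infty$, the lattice homomorphism $j$ sends $B_{L^\infty} = \{f : |f| \le \one\}$ into the solid set $\{g \in L^1 : |g| \le j(\one)\}$, which is weakly compact in $L^1$ by order continuity of the $L^1$-norm; in case (b) with $q = 1$, I would invoke the classical fact that every bounded operator from an AM-space with unit (such as $L^\infty$) into a Dedekind complete Banach lattice (such as $L^1$) is order-bounded (regular), so that $S(B_{L^\infty})$ lies in an order interval of $L^1$ and is, again by order continuity of the $L^1$-norm, weakly compact.

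The subtlest point I expect is the last subcase of (b) with $q = 1$: unlike in (a), where the lattice-homomorphism structure of $j$ directly provides the order-interval containment, in (b) the a priori merely bounded operator $S$ has to be shown to be order-bounded, which requires the nontrivial classical fact that bounded operators from AM-spaces with unit into Dedekind complete Banach lattices are automatically regular.
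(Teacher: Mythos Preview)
Your proposal is correct and matches the paper's approach almost exactly: factor $T = j \circ (j^{-1}T)$ via the closed graph theorem, then combine reflexivity of $L^r$ for $r\in(1,\infty)$, the Dunford--Pettis property of $L^1$ and $L^\infty$, and weak compactness of order intervals in spaces with order continuous norm. The only organisational difference is in case~(b): the paper treats all $q \in [1,\infty)$ uniformly by noting that $j^{-1}T$ sends $B_{L^\infty}$ into (the complex hull of) an order interval of $L^q$ --- in effect invoking, for the full range of $q$, precisely the AM-space regularity fact you singled out for the subcase $q=1$ --- whereas you first peel off $q\in(1,\infty)$ via reflexivity.
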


The above theorem is, of instance, related to \cite[Proposition 2.1]{Wu2000}, where uniformly integrable operators are considered.

\begin{proof}[Proof of Theorem~\ref{thm:end-points-of-the-L_p-scale}]
	We first observe that, in any case, $j^{-1}T: L^p \to L^q$ is continuous due to the closed graph theorem.

	(a) Let us first show that $T: L^1 \to L^1$ is weakly compact. Consider the case $q \not= \infty$ first. Then the bounded operator $j^{-1}T$ is weakly compact since $L^q$ is reflexive. Since the embedding $j$ is continuous, we conclude that $T = jj^{-1}T$ is weakly compact. Now consider the case $q = \infty$. Then $j$ maps the unit ball of $L^q = L^\infty$ into a subset of $L^1$ of the form $J + iJ$, where $J$ is an order interval in $L^1$. But order intervals in $L^1$ are weakly compact as $L^1$ has order continuous norm (see \cite[Theorem~2.4.2(i) and~(vi)]{Meyer-Nieberg1991}), so $j$ is weakly compact and hence, so is $T = jj^{-1}T$. 
	
	We have thus proved that $T$ is weakly compact. In particular, $T$ is a Dunford--Pettis operator as $L^1$ is a Dunford--Pettis space and hence, $T^2$ is compact (as a composition of a weakly compact operator with a Dunford--Pettis operator).
	
	(b) The mapping $j^{-1}T: L^\infty \to L^q$ maps the unit ball of $L^\infty$ into a subset of $L^q$ of the form $J + iJ$, where $J$ is an order interval in $L^q$. Since $L^q$ has order continuous norm, we again know that order intervals in $L^q$ are weakly compact \cite[Theorem~2.4.2(i) and~(vi)]{Meyer-Nieberg1991}. Hence, $j^{-1}T$ is weakly compact and thus, so is $T = jj^{-1}T$. As $L^\infty$ is a Dunford--Pettis space, we conclude that $T$ is a Dunford--Pettis operator, so $T^2$ is compact (as a composition of a weakly compact operator with a Dunford--Pettis operators).
	
	(c) As $L^p$ is reflexive, the mappings $j^{-1}T: L^p \to L^q$ and $j: L^q \to L^p$ are weakly compact. Thus, $j$ is a Dunford--Pettis operator as $L^q$ is a Dunford--Pettis space (since $q \in \{1,\infty\}$) and consequently, $T = jj^{-1}T$ is compact. 
\end{proof}

The proof of assertion~(c) in the above theorem is actually a special case of the more general observation that a bounded linear operator on a reflexive space which factors through a Dunford--Pettis space is compact. Some of the arguments from Dunford--Pettis theory used in the above proof can be put in a more general context if one considers so-called \emph{principle ideals} in Banach lattices; this is explained in detail in \cite[Section~2]{Daners2017} and in \cite[Section~2]{DanersUnif}.

\subsection{Pitt's theorem on $\ell^p$}

Obviously, one can also apply Theorem~\ref{thm:main-result-positive} to operators on $\ell^p$ whose range is contained in $\ell^q$ for some $q < p$. In this case, however, the assertion of the theorem is an immediate consequence of a much more general result of Pitt which asserts that, for $1 \le q < p < \infty$, every bounded linear operator from $\ell^p$ to $\ell^q$ is compact. For a thorough discussion of Pitt's theorem and possible generalisations we refer the reader to the Appendix of \cite{Rosenthal1969}.

\subsection*{Acknowledgements} 

I am deeply indebted to Delio Mugnolo for several very helpful discussions and comments; he told me about Miclo's article \cite{Miclo2015}, which was the motivation for writing the present paper, he brought Pitt's theorem to my attention and he suggested to generalise the result in Theorem~\ref{thm:main-result-introduction} to Theorem~\ref{thm:main-result-positive}. I am also sincerely indebted to Manfred Wolff for several very fruitful discussions; in particular, he has considerably contributed to bringing the results of Subsection~\ref{subsection:eigenspaces-on-ultra-powers} into their current form, while the first drafts of this paper contained only a much more restricted version of Corollary~\ref{cor:eigenspace-on-ultra-power}. My thanks also go to Christophe Cuny for a very interesting discussion concerning Lotz' paper \cite{Lotz1986} and for pointing out references \cite{ElMachkouri2019} and \cite{Wu2000} to me, as well as to Markus Haase for several comments which helped me to further improve the manuscript.

Part of the work on this article was done while the author was affiliated with the Institute of Applied Analysis, Ulm University, Germany.

\appendix

\section{Banach space isomorphisms between $L^p$- and $L^q$-spaces} \label{section:banach-space-isomorphism-between-L_p-and-L_q-spaces}

The fact that, for instance, $L^p([0,1])$ and $L^q([0,1])$ are not isomorphic as Banach spaces for $p \not= q$ can be shown by techniques from the geometric theory of Banach spaces. Here we give a version of this result for general $L^p$- and $L^q$-spaces, which is needed in the proof of Theorem~\ref{thm:main-result-contractive}. Recall that two Banach spaces are called \emph{isomorphic} if there exists a bounded bijective linear operator between them.

\begin{theorem} \label{thm:L_p-and-L_q-are-not-banach-space-isomorphic}
	Let $p,q \in (1,\infty)$ be two distinct numbers and let $(\Omega_1,\mu_1)$ and $(\Omega_2,\mu_2)$ be two arbitrary measure spaces. If the Banach spaces $L^p(\Omega_1,\mu_1)$ and $L^q(\Omega_2, \mu_2)$ are isomorphic, then they are finite dimensional.
\end{theorem}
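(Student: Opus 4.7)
The plan is to proceed by contrapositive, using Rademacher \emph{type} and \emph{cotype} — standard invariants of the Banach space isomorphism class that are sharply realised by $L^p$-spaces. Assume that $L^p := L^p(\Omega_1, \mu_1)$ (and hence, by the assumed isomorphism, $L^q := L^q(\Omega_2, \mu_2)$) is infinite dimensional; I aim to derive a contradiction from $p \neq q$.

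The first step is to show that any infinite-dimensional $L^p$-space contains an isometric copy of $\ell^p$. By a standard maximality argument (which does not need $\sigma$-finiteness of the measure) one produces a pairwise disjoint sequence $(A_n)_{n \in \bbN}$ of measurable sets with $0 < \mu_1(A_n) < \infty$; the normalized indicator functions $f_n := \mu_1(A_n)^{-1/p} \chi_{A_n}$ are disjointly supported and of norm one, so by the disjointness property of the $L^p$-norm their closed linear span is isometrically isomorphic to $\ell^p$. The same reasoning applied to the other side gives $\ell^q \hookrightarrow L^q$.

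Next I would invoke the classical fact — a consequence of Khintchine's inequality applied fibrewise together with Fubini — that every $L^r$-space has Rademacher type $\min(r,2)$ and cotype $\max(r,2)$. Moreover, these values are sharp for $\ell^r$: a direct computation with the standard unit-vector basis $(e_n)$ of $\ell^r$, using $\bigl\lVert \sum_{n=1}^N \varepsilon_n e_n \bigr\rVert_{\ell^r} = N^{1/r}$ for every choice of signs, shows that $\ell^r$ admits no type strictly greater than $\min(r,2)$ and no cotype strictly less than $\max(r,2)$.

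Since both type and cotype pass to subspaces and are invariants of the Banach space isomorphism class, the chain $\ell^p \hookrightarrow L^p \cong L^q$ forces $\ell^p$ to inherit type $\min(q,2)$ and cotype $\max(q,2)$; combined with the sharpness above, this gives $\min(p,2) \ge \min(q,2)$ and $\max(p,2) \le \max(q,2)$. Applying the same argument to $\ell^q \hookrightarrow L^q \cong L^p$ yields the reverse inequalities, so both are equalities, and this system immediately forces $p = q$, the desired contradiction. The main point requiring care is the first step, since the measure spaces are not assumed to be $\sigma$-finite; however, infinite dimensionality of $L^p$ by itself is enough to produce the disjoint sequence of sets of positive finite measure used to build the isometric $\ell^p$-copy, so non-$\sigma$-finiteness presents no real obstacle.
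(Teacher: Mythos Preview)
Your proof is correct and follows essentially the same route as the paper: both arguments use the optimal Rademacher type $\min(r,2)$ and cotype $\max(r,2)$ of infinite-dimensional $L^r$-spaces as isomorphism invariants, with the sharpness coming from the embedded copy of $\ell^r$. Your final deduction via the symmetric pair of inequalities is a slightly cleaner alternative to the paper's four-case analysis, but the underlying idea is identical.
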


Theorem~\ref{thm:L_p-and-L_q-are-not-banach-space-isomorphic} follows from arguments which rely on the \emph{type} and the \emph{cotype} (more precisely, on the \emph{Rademacher type} and \emph{cotype}) of a Banach space. In the literature, one typically finds a version of Theorem~\ref{thm:L_p-and-L_q-are-not-banach-space-isomorphic} which states that the spaces $L^p([0,1])$ and $L^q([0,1])$ are not isomorphic for $p \not= q$. The slightly more general assertion of Theorem~\ref{thm:L_p-and-L_q-are-not-banach-space-isomorphic} can be proved in exactly the same way; for the convenience of the reader -- and since we could not find a reference for exactly this version of the result in the literature -- we sketch the main steps of the proof.

\begin{proof}[Sketch of the proof of Theorem~\ref{thm:L_p-and-L_q-are-not-banach-space-isomorphic}]
	We use the notions of (Rademacher) type and cotype of a Banach space which can, for instance, be found in \cite[Definition~6.2.10]{Albiac2006}. The point about those notions is that the optimal type and the optimal cotype of a space are invariant under Banach space isomorphisms. 
	
	Now assume that $L^p := L^p(\Omega_1,\mu_1)$ and $L^q := L^q(\Omega_2,\mu_2)$ are isomorphic and infinite dimensional. According to \cite[Theorem~6.2.14]{Albiac2006} the optimal types of those spaces are $\min \{p,2\}$ and $\min\{q,2\}$, respectively, while their optimal cotypes are $\max\{p,2\}$ and $\max\{q,2\}$, respectively. Now we distinguish the following four cases:
	\begin{enumerate}[\upshape (i)]
		\item $p < 2$, $q \le 2$,
		\item $p < 2$, $q > 2$,
		\item $p \ge 2$, $q < 2$,
		\item $p \ge 2$, $q \ge 2$. 
	\end{enumerate}
	For each case we obtain a contradiction (since $p \not= q$): in the cases (i)--(iii) the spaces $L^p$ and $L^q$ have distinct optimal type and in case~(iv) they have distinct optimal cotype.
\end{proof}

\begin{remark_no_number}
	Regarding the proof of Theorem~\ref{thm:L_p-and-L_q-are-not-banach-space-isomorphic} the following remark seems to be in order: in the reference \cite[Theorem~6.2.14]{Albiac2006} the assumption that the $L^p$-space under consideration be infinite dimensional is not stated explicitly. However, this assumption is needed for the optimality assertion for both the type and the cotype. Indeed, every finite dimensional Banach space is isomorphic to a Hilbert space and thus has optimal type and optimal cotpye $2$. In the proof of \cite[Theorem~6.2.14]{Albiac2006} the infinite dimension of the space is needed at the very end, where the fact that $L^p$ contains the sequence space $\ell^p$ is used.
\end{remark_no_number}

\section{Lattice isomorphisms between $L^p$- and $L^q$-spaces} \label{section:lattice-isomorphism-between-L_p-and-L_q-spaces}

Theorem~\ref{thm:L_p-and-L_q-are-not-banach-space-isomorphic} employs non-trivial concepts from geometry of Banach spaces. While we need this result in the proof of Theorem~\ref{thm:main-result-contractive}, our other main theorem (Theorem~\ref{thm:main-result-positive}) only makes use of the much simpler observation that infinite dimensional $L^p$- and $L^q$-spaces are not \emph{lattice}-isomorphic for $p \not= q$. We think it is instructive to include an elementary proof of this result (although it is, of course, a special case of the much deeper Theorem~\ref{thm:L_p-and-L_q-are-not-banach-space-isomorphic}):

\begin{proposition} \label{prop:L_p-and-L_q-are-non-lattice-isomorphic}
	Let $p,q \in [1,\infty)$ be two distinct numbers, let $(\Omega_1,\mu_1)$ and $(\Omega_2,\mu_2)$ be arbitrary measure spaces and assume that $L^p := L^p(\Omega_1,\mu_1)$ and $L^q := L^q(\Omega_2,\mu_2)$ are isomorphic as Banach lattices (i.e.\ there exists a lattice isomorphism $L^q \to L^p$). Then $L^q$ (and hence $L^p$) has finite dimension.
\end{proposition}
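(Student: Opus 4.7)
The plan is to exploit the rigidity of the $L^r$-norm on families of pairwise disjoint elements, namely $\|\sum_i a_i h_i\|_r^r = \sum_i |a_i|^r \|h_i\|_r^r$. Let $\Phi \colon L^q \to L^p$ be a lattice isomorphism. First I would note that $\Phi$ is automatically a topological linear isomorphism, because lattice homomorphisms between Banach lattices are positive, hence norm continuous, and the same reasoning applies to $\Phi^{-1}$; consequently there exist $0 < c \le C$ with $c\|f\|_q \le \|\Phi f\|_p \le C\|f\|_q$ for all $f \in L^q$. Moreover, $\Phi$ preserves disjointness since it commutes with the lattice operations.

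Assuming for contradiction that $L^q$ is infinite dimensional, the next step is to produce, for every $n \in \bbN$, pairwise disjoint elements $f_1,\dots,f_n \in L^q$ with $\|f_i\|_{L^q} = 1$. This follows from standard measure theoretic considerations: either $(\Omega_2,\mu_2)$ carries infinitely many atoms of positive finite measure, giving disjoint normalised indicator functions, or else it contains a non-atomic set of positive finite measure, which can be subdivided into arbitrarily many disjoint pieces of positive finite measure. Setting $g_i := \Phi f_i$ and $\alpha_i := \|g_i\|_{L^p}$, the $g_i$ are pairwise disjoint in $L^p$ and satisfy $c \le \alpha_i \le C$.

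For any choice of scalars $a_1,\dots,a_n$, the disjointness identity applied in both $L^q$ and $L^p$, combined with the norm bounds on $\Phi$, yields
\[
 c\Bigl(\sum_{i=1}^n |a_i|^q\Bigr)^{1/q} \;\le\; \Bigl(\sum_{i=1}^n |a_i|^p \alpha_i^p\Bigr)^{1/p} \;\le\; C \Bigl(\sum_{i=1}^n |a_i|^q\Bigr)^{1/q}.
\]
Plugging in $a_i = 1$ for all $i$ yields $c n^{1/q} \le \bigl(\sum_i \alpha_i^p\bigr)^{1/p} \le C n^{1/q}$, and combining these inequalities with the pointwise bounds $c \le \alpha_i \le C$ produces
\[
 n^{(p-q)/q} \le (C/c)^p \quad \text{and} \quad n^{(q-p)/q} \le (C/c)^p.
\]
Since $p \neq q$, precisely one of the two exponents is strictly positive, so letting $n \to \infty$ produces a contradiction.

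The main obstacle, to the extent there is one, is the extraction of arbitrarily many disjoint normalised elements from an infinite dimensional $L^q$-space; this is a textbook fact whose proof reduces to a short case split on the atomic and non-atomic parts of the underlying measure space. The ensuing scalar estimate that forces the contradiction is entirely elementary, which is exactly what one should expect for a result that is so much more restrictive than the full Banach space version in Theorem~\ref{thm:L_p-and-L_q-are-not-banach-space-isomorphic}.
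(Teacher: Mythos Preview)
Your proof is correct and follows essentially the same route as the paper's: both arguments use that a lattice isomorphism preserves disjointness, invoke the identity $\|\sum_i a_i h_i\|_r^r = \sum_i |a_i|^r \|h_i\|_r^r$ for pairwise disjoint $h_i$, and then exploit the incompatibility of the $\ell^p$- and $\ell^q$-scalings on a sequence of disjoint normalised vectors. The only cosmetic difference is in how the contradiction is extracted---the paper chooses coefficients $a_k = k^{-1/p}$ so that $\sum_k a_k f_k$ converges in $L^q$ while its image fails to be Cauchy in $L^p$, whereas you take $a_i \equiv 1$ and compare the growth rates $n^{1/p}$ versus $n^{1/q}$ directly; the underlying mechanism is identical.
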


For the proof we need the simple observation that, in every infinite dimensional Banach lattice $E$, there exists a sequence $(x_k)_{k \in \bbN} \subseteq E_+$ of normalised and pairwise disjoint vectors (i.e.\ $\norm{x_k} = 1$ for all indices $k$ and $x_j \land x_k = 0$ whenever $j \not= k$).

\begin{proof}[Proof of Proposition~\ref{prop:L_p-and-L_q-are-non-lattice-isomorphic}]
	We may assume that $p < q$. Assume for a contradiction that $L^q$ is infinite-dimensional. Then there exists a sequence $(f_k)_{k \in \bbN}$ of normalised and pairwise disjoint vectors $0 \le f_k \in L^q$. The series $\sum_{k=1}^\infty \frac{f_k}{k^{1/p}}$ converges in $L^q$ since, for $1 \le m \le n$, we have
	\begin{align*}
		\Big\lVert\sum_{k=m}^n \frac{f_k}{k^{1/p}} \Big\rVert_q^q = \sum_{k=m}^n \frac{\norm{f_k}_q^q}{k^{q/p}} = \sum_{k=m}^n \frac{1}{k^{q/p}} \to 0 \quad \text{as } m,n \to \infty.
	\end{align*}
	Now, let $J: L^q \to L^p$ be a lattice isomorphism. Then the vectors $g_k := Jf_k \in L^p$ are also pairwise disjoint. As $J$ is continuous, the series $\sum_{k=1}^\infty \frac{g_k}{k^{1/p}}$ converges in $L^p$. However, the mapping $J^{-1}$ is continuous, too, so we have $\norm{g_k}_p \ge c \norm{f_k}_q = c$ for a constant $c > 0$ and all indices $k$. This implies that, for $1 \le m \le n$,
	\begin{align*}
		\Big\lVert\sum_{k=m}^n \frac{g_k}{k^{1/p}} \Big\rVert_p^p = \sum_{k=m}^n \frac{\norm{g_k}_p^p}{k} \ge \sum_{k=m}^n \frac{c^p}{k} \not\to 0 \quad \text{as } m,n \to \infty.
	\end{align*}
	This is a contradiction.
\end{proof}

\bibliographystyle{plain}
\bibliography{literature}

\end{document}